\def\beq{\begin{equation}}
	\def\eeq{\end{equation}}
\def\beqn{\begin{equation*}}
	\def\eeqn{\end{equation*}}
\def\baq{\begin{eqnarray}}
	\def\eaq{\end{eqnarray}}
\def\baqn{\begin{eqnarray*}}
	\def\eaqn{\end{eqnarray*}}
\newcommand{\Multi}{\,\,\lower 1pt
	\hbox{$\overrightarrow{\longrightarrow}$}\,\,}
\newcommand{\multi}{\,\,\lower 1pt
	\hbox{$\overrightarrow{\rightarrow}$}\,\,}
\def\image #1 (#2,#3) (echelle #4) #5{
	\dimen2=#2
	\dimen3=#3
	\divide \dimen2 by 1000
	\multiply \dimen2 by #4
	\divide \dimen3 by 1000
	\multiply \dimen3 by #4
	\setbox1 =\vbox to \dimen2{\hsize=\dimen3\vfill\special{picture #1
			scaled #4}}
	\vbox{\hsize=\dimen3\box1\medskip\centerline{#5}}
}
\newtheorem{definition}{Definition}[section]
\newtheorem{proposition}{Proposition}[section]
\newtheorem{theorem}{Theorem}[section]
\newtheorem{corollary}{Corollary}[section]
\newtheorem{lemma}{Lemma}[section]
\newtheorem{remark}{Remark}[section]
\newcommand{\ope}{\operatorname}
\numberwithin{equation}{section}
\newcommand{\Lim}{\ope{Lim}}
\begin{document}
	\title{Representation formulas for maximal monotone operators of type (D) in Banach spaces whose dual spaces are strictly convex}
	\author{ Nguyen B. Tran\thanks{nguyenbaotran31@gmail.com}, Tran N. Nguyen, Huynh M. Hien \\
		Department of Mathematics and Statistics,\\
		Quy Nhon University, \\
		170 An Duong Vuong, Quy Nhon, Vietnam \\
	}
	\date{}
	\maketitle
	
	\begin{abstract}

		This work deals with a maximal monotone operator $A$ of type (D) in a Banach space whose dual space is strictly convex. We establish some representations for the value $Ax$ at a given point $x$ via its values at nearby points of $x$. We show that the faces of $Ax$ are contained in the set of all  weak$^*$ convergent limits of bounded nets of the operator at nearby points of $x$, then we obtain a representation for $Ax$ by use of this set. In addition,  representations for the support function of $Ax$ based on the  minimal-norm selection of the operator in certain Banach spaces are given.

		\bigskip 
		\noindent\textbf{Keywords:} 
		Maximal monotone operator, monotone operator of type (D), minimal-norm selection, w-Kadec-Klee-property, w$^*$-Kadec-Klee-property, strictly convex space
		
		\bigskip
		\noindent{\bf MSC 2020:} 47H05, 47H04, 47N10
	\end{abstract}
	
	%\tableofcontents
	
	\section{Introduction}  
	The theory of maximal monotone operators has had a great development for over sixty years. The concept of maximal monotone operators is a generalization of  subdifferentials of lower semi-continuous convex functions. Maximal monotone operators rapidly found uses for subgradients, optimization, algorithms, financial mathematics, and much more.  The closedness of graphs of maximal monotone operators plays a significant role in the theory of variational analysis.  It is well-known that the graph of a maximal monotone operator is norm$\times$weak$^*$ sequentially closed. However,  the graph of such an operator might be not norm$\times$weak$^*$ closed.  This occurs due to the fact that a weak$^*$ convergent net might be unbounded.

	In 2003, Borwein et al.~\cite{BFG} gave an example showing that the subdifferential of a lower semi-continuous function in a separable Hilbert space is not norm$\times$weak$^*$ closed. 
	In 2012, Borwein and Yao~\cite{BY1} provided an explicit structure formula for maximal monotone operators whose domains have nonempty interiors. Consequently, the graphs of these operators are
	norm$\times$weak$^*$ closed. The authors considered a maximal monotone operator in a subset of  the interior of its domain in which the operator is locally bounded. The support function of the operator at a given point $x$ equals the support function in the set consisting of the weak$^*$ convergent limits of the operator at nearby points of $x$.   Recently, Hantoute and Nguyen~\cite{HB} considered maximal monotone operators in Hilbert spaces whose domains may have empty interiors
	and established a relation  between the faces of its values at a given point $x$ and  the set of strong convergent limits of the operator at nearby points of $x$. 
	Then, the authors provided  an explicit representation for maximal monotone operators.
	These results were extended to maximal monotone operators in reflexive Banach spaces 
	which are locally bounded together with their dual spaces by Khanh and Nguyen~\cite{NK}. Furthermore, based on  the minimal-norm selections,
	the authors discovered another formula for maximal monotone operators in that class of spaces. 
	
	The main approach in \cite{HB,NK} is that the authors used Minty surjectivity theorem in reflective spaces. A class of maximal monotone operators in nonreflexive Banach spaces which share several properties with these operators in reflexive Banach spaces was introduced by Gossez  in 1971 and named `type (D)'; see \cite{Go1}. A typical property of such a  maximal monotone operator is that the range of the sum of it and the duality mapping is dense in the dual space. In 1992, Fitzpatrick and Phelps~\cite{FP} defined a  class of maximal monotone operators which includes the class of type (D) and was later called  `type (FP)'. Four years later, Simons~\cite{Si1} introduced the class (NI) which was seemed to generalize the class of type (D). However, in 2010, Marques and Svaiter~\cite{MaSv1} showed that these two classes are identical. One year later, Bauschke et al.~\cite{BBWY} discovered that these three of types coincide.

	In this paper we consider a maximal monotone operator of  type (D) in a nonreflexive Banach space whose dual space is strictly convex and try to extend the results in \cite{HB,NK}. We represent the value of the operator at a point $x$ via the set 
	consisting of weak$^*$ convergent limits of its bounded nets at nearby points of $x$. We introduce the concepts `w-Kadec-Klee property' and `w$^*$-Kadec-Klee property' for Banach spaces and their dual spaces, respectively. 
	These classes of spaces include locally uniformly convex spaces.  Respective results are given for the case that $X$ has the w-Kadec-Klee property  and $X^*$ has the w$^*$-Kadec-Klee property.

	The paper is organized as follows. In Section \ref{sec2} we introduce the necessary background material which is well-known in principle \cite{Ba,Phe,Simons}.  Since the field requires a substantial amount of notation and concepts, we nevertheless provide some details.  In Section \ref{sec3} we consider a maximal monotone operator $A$ of type (D) in a nonreflexive Banach space whose dual space is strictly convex.  In Theorem \ref{T.s5.1} we represent  the value $Ax$ via the values of $A$ at nearby points of $x$. Theorem \ref{Conv.function} deals with lower semicontinuous convex functions. 
	In the final section we give a representation formula for the support function of the value of a maximal monotone operator in a nonreflexive Banach space whose dual space has the w$^*$-Kadec-Klee property by means of its minimal-norm  selections.

	\section{Preliminaries} \label{sec2}
	
	Throughout the present paper, $(X,\Vert \cdot \Vert)$ is a real Banach space and $X^*, X^{**}$ are its dual and bidual spaces,  respectively. The spaces $X$ and $X^*$ are paired by $\langle \cdot, \cdot\rangle$. We denote by $\rightarrow$, $\overset{\ope{w}}{\to}$, and $\overset{\ope{w}^*}{\rightharpoonup}$ the norm convergence, weak convergence, and weak$^*$ convergence of nets, respectively.  
	
	We first recall some geometric properties of real Banach spaces.
	\begin{definition}\rm Let $X$ be a real Banach space. 
		\begin{itemize}
			\item[(i)] $X$ is called  \emph{strictly convex} if for $x,y\in X$, $x \ne y$ with $\Vert x \Vert = \Vert y \Vert =1$, then $\Vert x+y\Vert <2.$
			\item[(ii)] $X$ is called  \emph{locally uniformly convex} if, for every $x \in X$ with $\Vert x\Vert =1$ and $\varepsilon >0$, there exists $\delta (x, \varepsilon)>0$ such that for any $y \in X$ with $\Vert y\Vert =1$, $\Vert x-y\Vert > \varepsilon$, then $ \Vert x+y\Vert < 2- \delta.$
		\end{itemize}
	\end{definition}
	It is clear that a locally uniformly convex space is strictly convex. 
	The \textit{duality mappings} of $X$ and $X^*$ are respectively defined by
	\begin{align}\label{D.M}
		J(x)&:= \{x^*\in X^*:\, \frac{1}{2}\Vert y\Vert^2 \ge \frac{1}{2}\Vert x\Vert^2 +\langle x^*, y-x \rangle, \forall y \in X\},\\ 
		\label{D.M.D}
		J^*(x^*)&:= \{x^{**} \in X^{**}: \langle x^{**}, x^*\rangle = \Vert x^*\Vert^2 = \Vert x^{**}\Vert^2\}.
	\end{align}
	
	For $x \in X$,  the set $J(x)$ is nonempty and closed convex. If $X^*$ is strictly convex, then $J(x)$ is a singleton.

	Let $C$ be a nonempty subset  of $X$. We denote by 
	$\overline{C}, \ope{co}C,\ope{int}C$, and $\ope{bd}C$ the \emph{closure}, \emph{convex hull},  \emph{interior}, and \emph{boundary} of $C$, respectively. For $x \in \overline{C}$, the \emph{normal cone} of $C$ at $x$ is defined by
	$$N_C(x):= \{x^* \in X^*: \langle x^*, y-x\rangle \le 0, \forall y \in C\}.$$
	It is well-known that $N_C(x)$ is closed convex.
	The \emph{support function} of $C$ is the function from $X^*$ to $\overline{\mathbb{R}}:= \mathbb{R} \cup \{+\infty\}$ defined by 
	$$ \sigma_{C}(x^*)= \underset{x \in C}{\sup}\,\langle x^*, x \rangle.$$
	The support function $\sigma_K: X \to \overline{\mathbb{R}}$ of a nonempty subset  $K \subset X^*$ is defined analogously. 
	
	Let $f: X \to \overline{\mathbb R}$  be a convex function. The \emph{domain} of $f$, denoted by $\operatorname{dom}f$, is the set of points $x \in X$ such that $f(x) <+\infty$. We call $f$  \emph{lower semicontinuous} on $X$ if for all  $x\in X$   $$ \underset{y\to x}{\lim\inf} f(y) \ge f(x).$$
	Let $f$ be a lower semicontinuous convex function on $X$, $x \in \operatorname{dom}f$ and $\varepsilon \ge 0$. A linear continuous function $x^*\in X^*$ is called a \emph{$\varepsilon$-subgradient} of $f$ at $x$ if  
	$$ f(y) \ge f(x) +\langle x^*, y-x \rangle -\varepsilon, \, \forall y \in X.$$
	The set of all $\varepsilon$-subgradients of $f$ at $x$ is denoted by $\partial_{\varepsilon} f(x)$ and is called the $\varepsilon$-{\em subdifferential} of $f$ at $x$. For the case of $f(x)= +\infty$ we write $\partial_{\varepsilon} f(x)= \varnothing.$ 
	
	As usual, $\partial_{0}f(x)$ is denoted again by $\partial f(x)$ and is called  the $\emph{subdifferential}$ of $f$ at $x$. In the case of $f(x)=\frac{1}{2}\Vert x\Vert^2$ for  $x \in X$, we denote by $J_{\varepsilon}(x)$  the $\varepsilon$-subdifferential of $f$ at $x$.
	From \eqref{D.M}, $J(x)=\partial f(x)$.
	
	Let $A: X \rightrightarrows X^{*}$ be a set-valued operator. The \emph{domain}, \emph{graph}, and \emph{range} of $A$ are given, respectively, by
	\begin{align*}
		{D}(A)&:= \{x \in X: Ax \ne \varnothing\},\\
		{Gr}(A)&:= \{(x,x^*) \in X \times X^*: x^* \in Ax\},\\
		{R}(A)&:= \underset{x \in X}{\bigcup} Ax. 
	\end{align*}
	\begin{definition} \rm 
		(i) An operator $A$ is called \emph{monotone} if 
		$$ \langle x^*-y^*, x-y\rangle \ge 0, \mbox{ for } (x, x^*), (y,y^*) \in {Gr}(A).$$
		(ii) A monotone operator $A$ is called  \emph{maximal monotone} if  ${Gr}(A)$ is not properly
		contained in the graph of any other monotone operators. 
	\end{definition} 
	
	If $A$ is a maximal monotone operator, then for every $x \in X,$ $Ax$ is closed, convex (see \cite[Proposition 2.1]{Ba}) and satisfies the equality
	\begin{equation}\label{2.1}
		Ax= Ax+ N_{\overline{{D}(A)}}(x).
	\end{equation}
	Let $x \in {D}(A)$. The  \emph{minimal-norm selection} of $Ax$ is the set  
	$$A^\circ x:= \{x^*\in Ax:  \Vert x^*\Vert= \underset{y^* \in Ax}{\inf}\Vert y^*\Vert \}.$$
	If $X^*$ is strictly convex, then $A^{\circ}$ is a singleton. 
	
	We call a net $(x_i)_{i \in I}$ in $X$ \emph{eventually bounded} if there exists $M>0$ and $i_0 \in I$ such that
	$$ \Vert x_i\Vert < M,\,\, \forall i\succeq_I i_0.$$ 
	According to \cite[Fact 3.5]{BY1}, if $(x_i, x^*_i)_{i \in I}$ is a net in ${Gr}(A)$ such that $x_i \to x$, $x^*_i \overset{\ope{w}^*}{\rightharpoonup} x^*$, and $(x^*_i)_{i \in I}$ is eventually bounded, then $(x, x^*) \in {Gr}(A)$.
	
	The next concept was introduced by  Gossez (\cite{Go}).
	\begin{definition}\rm  Let $X$ be a Banach space and $A: X \rightrightarrows X^{*}$ be a maximal monotone operator. The  Gossez’s monotone closure of $A$ is defined by 
		$$ \overline{A}^g= \Big\{(x^{**}, x^*)\in X^{**} \times X^*: \langle x^*-y^*, x^{**}-y\rangle \ge 0, \forall (y, y^*) \in {Gr}(A)\Big\}.$$
		The operator $A$ is called of   \emph{type (D)} if for any $(x^{**}, x^*) \in \overline{A}^g$ there exists a bounded net $(x_i, x^*_i)_{i\in I}$ in ${Gr}(A)$ such that $x_i$ converges to $x^{**}$ in $\tau(X^{**}, X^*)$ and $x^*_i$ strongly converges to $x^*$.
	\end{definition}
	
	The following theorem is a characterization of maximal monotone operators of type (D). 
	\begin{theorem}[{\cite[Theorem 4.4]{MaSv1}}]\label{D.eq} Let $X$ be a Banach space and $A: X \rightrightarrows X^{*}$ be a maximal monotone operator. Then $A$ is of type (D) if and only if
		$$ {R}(\lambda J_{\varepsilon}(\cdot -x) +A) =X^*,  \forall \lambda >0, \varepsilon >0, \forall x \in X.$$
	\end{theorem}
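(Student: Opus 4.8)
The plan is to route the equivalence through Simons' condition (NI), using the fact recalled in the introduction (\cite{MaSv1}) that type (D) and type (NI) coincide, and to prove instead that the surjectivity condition is equivalent to (NI). Recall that (NI) asserts that for every $(x^{**},x^*)\in X^{**}\times X^*$ one has $\inf_{(y,y^*)\in Gr(A)}\langle x^{**}-y,\,x^*-y^*\rangle\le0$. A harmless reduction simplifies the parameters: replacing $A$ by the operator $u\mapsto A(u+x)-w^*$, which is again maximal monotone and of type (D) because translation by $x\in X$ and a dual shift by $w^*$ preserve $\overline{A}^g$ and the defining bounded nets, it suffices to treat $x=0$ and to show that $0\in R(\lambda J_\varepsilon(\cdot)+A)$ for all $\lambda,\varepsilon>0$. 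Using $u^*\in J_\varepsilon(u)\Leftrightarrow\tfrac12\|u\|^2+\tfrac12\|u^*\|^2\le\langle u^*,u\rangle+\varepsilon$, this target is in turn equivalent to producing $(z,z^*)\in Gr(A)$ with $\tfrac{\lambda}{2}\|z\|^2+\langle z^*,z\rangle+\tfrac{1}{2\lambda}\|z^*\|^2\le\lambda\varepsilon$.

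For the implication surjectivity $\Rightarrow$ (NI) I would argue by contradiction. If (NI) fails at some $(x^{**},x^*)$, then $\langle x^{**}-y,\,x^*-y^*\rangle\ge\delta>0$ for all $(y,y^*)\in Gr(A)$; in particular $(x^{**},x^*)\in\overline{A}^g$. Fix, by Goldstine's theorem, a single $a\in X$ with $\|a\|\le\|x^{**}\|$, fix $\varepsilon>0$, and for each $\lambda>0$ use surjectivity with base point $a$ and target $x^*$ to obtain $(z,z^*)\in Gr(A)$ with $x^*-z^*=\lambda u^*$ and $u^*\in J_\varepsilon(z-a)$. Combining the subgradient inequality for $u^*$ with the monotonicity relation $\langle x^{**}-z,\,x^*-z^*\rangle\ge0$ yields a bound on $\|u^*\|$ and on $\|z-a\|$ that is uniform in $\lambda$. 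Since $\langle x^{**}-z,\,x^*-z^*\rangle=\lambda\langle u^*,\,x^{**}-z\rangle$ and the factor $\langle u^*,\,x^{**}-z\rangle$ is then bounded uniformly in $\lambda$, letting $\lambda\to0$ forces $\langle x^{**}-z,\,x^*-z^*\rangle<\delta$ for small $\lambda$, contradicting $\ge\delta$.

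For the reverse implication (NI) $\Rightarrow$ surjectivity I would bring in the Fitzpatrick function $F_A(z,z^*)=\sup_{(a,a^*)\in Gr(A)}\big(\langle z^*,a\rangle+\langle a^*,z\rangle-\langle a^*,a\rangle\big)$ (\cite{Simons}), which satisfies $F_A\ge\langle\cdot,\cdot\rangle$ with equality exactly on $Gr(A)$, and consider $\Psi(z,z^*):=F_A(z,z^*)+\tfrac{\lambda}{2}\|z\|^2+\tfrac{1}{2\lambda}\|z^*\|^2\ge0$. The crucial claim is $\inf\Psi=0$. I would obtain it by Fenchel--Rockafellar duality: the norm term is everywhere finite and continuous, so strong duality holds and $\inf\Psi$ equals the negative of a dual infimum expressed through the conjugate $F_A^*$, which is the extended Fitzpatrick function on $X^{**}\times X^*$; the (NI) inequality is exactly what makes that dual objective nonnegative, whence $\inf\Psi\le0$ and therefore $=0$. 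Finally I would take a pair with $\Psi$ below any prescribed tolerance, apply the Br{\o}ndsted--Rockafellar theorem for maximal monotone operators to replace it by a genuine $(z,z^*)\in Gr(A)$ for which the quantity $\tfrac{\lambda}{2}\|z\|^2+\langle z^*,z\rangle+\tfrac{1}{2\lambda}\|z^*\|^2$ (which equals $\Psi$ on the graph) stays below $\lambda\varepsilon$, and read off the required inclusion.

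The main obstacle is the identity $\inf\Psi=0$, and this is precisely where nonreflexivity bites. In a reflexive space the infimum would be attained at an exact solution of $0\in\lambda J(z)+Az$; here it is generally not attained in $X$, and the whole force of type (D) is packaged into the extended Fitzpatrick/(NI) inequality on the bidual, which is what rescues strong duality. The same non-attainment is what makes the $\varepsilon$-enlargement $J_\varepsilon$ unavoidable, the $\varepsilon$ absorbing the gap between $X$ and $X^{**}$; by comparison, the bookkeeping in surjectivity $\Rightarrow$ (NI) and the Br{\o}ndsted--Rockafellar passage to the graph are routine.
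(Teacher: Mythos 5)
The paper does not prove this statement at all: it is quoted directly from \cite[Theorem 4.4]{MaSv1}, so there is no internal argument to compare against. What you have written is, in effect, a reconstruction of the Marques--Svaiter methodology (Fitzpatrick functions, Fenchel duality, a generalized Br{\o}ndsted--Rockafellar principle), and its overall architecture is sound. Your reduction to $x=0$, the characterization $u^*\in J_\varepsilon(u)\Leftrightarrow\tfrac12\Vert u\Vert^2+\tfrac12\Vert u^*\Vert^2\le\langle u^*,u\rangle+\varepsilon$, and the direction ``surjectivity $\Rightarrow$ (NI)'' all check out: writing $x^*-z^*=\lambda u^*$ with $u^*\in J_\varepsilon(z-a)$, the $\varepsilon$-subgradient inequality gives $\langle u^*,x^{**}-\hat z\rangle\le\tfrac12\Vert x^{**}-\hat a\Vert^2-\tfrac12\Vert z-a\Vert^2+\varepsilon$, so $\delta\le\langle x^{**}-\hat z,x^*-z^*\rangle=\lambda\langle u^*,x^{**}-\hat z\rangle\le\lambda\bigl(\tfrac12\Vert x^{**}-\hat a\Vert^2+\varepsilon\bigr)\to0$, a contradiction. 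Using the published equivalence (D)$\Leftrightarrow$(NI) as a black box is also legitimate and not circular, since your argument for ``surjectivity $\Leftrightarrow$ (NI)'' nowhere invokes type (D).

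Two points in the direction ``(NI) $\Rightarrow$ surjectivity'' need repair before the sketch becomes a proof. First, $F_A^*$ is \emph{not} the extended Fitzpatrick function; in general one only has the inequality $F_A^*(w^*,w^{**})\ge\sup_{(a,a^*)\in Gr(A)}\bigl(\langle w^*,a\rangle+\langle w^{**},a^*\rangle-\langle a^*,a\rangle\bigr)$, obtained by restricting the supremum defining the conjugate to the graph, where $F_A$ coincides with the duality product. Fortunately this inequality combined with (NI) yields $F_A^*(w^*,w^{**})\ge\langle w^{**},w^*\rangle$, which is all your dual-nonnegativity argument needs, so the claim should simply be weakened to that inequality. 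Second, and more seriously, there is no ``Br{\o}ndsted--Rockafellar theorem for maximal monotone operators'' in nonreflexive spaces: the BR property for enlargements can fail for general maximal monotone operators there, which is precisely why the type (BR)/(NI) classes exist in the literature. The statement you actually need is the Marques--Svaiter BR theorem for convex representative functions $h$ satisfying $h\ge\langle\cdot,\cdot\rangle$ \emph{and} $h^*\ge\langle\cdot,\cdot\rangle$ on $X^*\times X^{**}$ --- available to you exactly because you already hold (NI) --- and it is a substantive theorem, not routine bookkeeping. Finally, once that theorem produces a graph point near an approximate minimizer of $\Psi$, your estimate on $\tfrac{\lambda}{2}\Vert z\Vert^2+\langle z^*,z\rangle+\tfrac{1}{2\lambda}\Vert z^*\Vert^2$ requires the approximate minimizers to lie in a fixed bounded set; this is true but must be said: minorizing $F_A$ by the affine function associated with any fixed $(a,a^*)\in Gr(A)$ shows that $\Psi$ is coercive, hence has bounded sublevel sets. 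With these repairs your outline is a correct proof along the lines of the cited source.
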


	Finally,  let $F: X \rightrightarrows X^{*}$ be a set-valued mapping. For $x\in X$ we define some types of convergent limits
	\begin{equation*}
		\begin{split}
			\ope{w}^*-\underset{y \to x, y\ne x}{\Lim\sup} \,Fy :=\Big\{  x^* & \in X^*:  \exists \text{ a net } (x_i, x^*_i)_{i\in I} \in {Gr}(F)  \text{ with }  \\
			&x_i \ne x ,\forall i \in I,  x_i \to x, \,\, x^*_i \overset{\ope{w}^*}{\rightharpoonup} x^*\Big\}, \\
			\ope{bw}^*-\underset{y \to x, y\ne x}{\Lim\sup} \,Fy :=\Big\{  x^* & \in X^*:  \exists \text{ a net } (x_i, x^*_i)_{i\in I} \in {Gr}(F) \text{ and } M >0 \text{ with }  \\
			&x_i \ne x ,\forall i \in I,  x_i \to x,  \Vert x^*_i\Vert \le M ,\, \forall  
			i\in I, \,\, x^*_i \overset{\ope{w}^*}{\rightharpoonup} x^*\Big\},\\
			{\ope{s}-}\underset{y \to x, y\ne x}{\Lim\sup} \,Fy:=\Big\{  x^* & \in X^*:  \exists \text{ a net } (x_i, x^*_i)_{i\in I} \in {Gr}(F)  \text{ with }  x_i \ne x ,\forall i \in I,\\
			& x_i \to x, \,  x^*_i \to x^*\Big\}.
		\end{split}
	\end{equation*}

	\section{Representation formulas for  maximal monotone operators of type (D)} \label{sec3}
	
	In this section we consider a maximal monotone operator $A$ of type (D) in a Banach space $X$
	whose dual space is strictly convex. We represent  the value $Ax$ via the values at nearby points of $x$. We define new properties for $X$ and $X^*$, which are generalizations of the Kadec-Klee property, named the w-Kadec-Klee and $\ope{w}^*$-Kadec-Klee, respectively.  A respective representation was obtained in the case that $X^*$  has the $\ope{w}^*$-Kadec-Klee property.

	We first observe a property of the minimal-norm selection.
	\begin{lemma}\label{mlm} Let $A: X \rightrightarrows X^*$ be a maximal monotone operator. Then the minimal-norm  $A^\circ x$ is nonempty for all $x \in {D}(A).$ 
	\end{lemma}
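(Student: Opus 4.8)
The plan is to exhibit $Ax$ as a nonempty, weak$^*$-closed, convex subset of $X^*$ and then to produce the minimal-norm element by combining the Banach--Alaoglu theorem with the weak$^*$-lower semicontinuity of the dual norm. Since $x\in {D}(A)$, the set $Ax$ is nonempty, and by \cite[Proposition 2.1]{Ba} it is convex and norm-closed; the only extra ingredient I need is its weak$^*$-closedness.

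First I would verify that $Ax$ is weak$^*$-closed. Suppose a net $(x^*_i)_{i\in I}\subset Ax$ satisfies $x^*_i\oow x^*$. For every $(y,y^*)\in {Gr}(A)$ monotonicity gives $\langle x^*_i-y^*,\,x-y\rangle\ge 0$. Because $x-y$ is a fixed element of $X$, the definition of weak$^*$ convergence lets me pass to the limit and obtain $\langle x^*-y^*,\,x-y\rangle\ge 0$ for all $(y,y^*)\in {Gr}(A)$. Maximality of $A$ then forces $(x,x^*)\in {Gr}(A)$, i.e. $x^*\in Ax$. Note that this argument needs no boundedness of the net, precisely because the test vector $x-y$ does not vary with $i$.

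Next, set $m:=\inf_{y^*\in Ax}\Vert y^*\Vert$ and fix any $r>m$. I would consider the set $S:=Ax\cap\{y^*\in X^*:\Vert y^*\Vert\le r\}$. By Banach--Alaoglu the closed ball of radius $r$ in $X^*$ is weak$^*$-compact, so $S$, being the intersection of the weak$^*$-closed set $Ax$ with a weak$^*$-compact set, is itself weak$^*$-compact; it is nonempty since $r>m$, and clearly $\inf_{y^*\in S}\Vert y^*\Vert=m$ (elements of $Ax$ with norm arbitrarily close to $m$ eventually lie in $S$). Finally, I would invoke the fact that the dual norm is weak$^*$-lower semicontinuous on $X^*$, being the supremum of the weak$^*$-continuous functionals $x^*\mapsto\langle x^*,z\rangle$ over $z$ in the unit ball of $X$. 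A weak$^*$-lower semicontinuous function attains its infimum on the weak$^*$-compact set $S$, so there exists $x^*\in S\subset Ax$ with $\Vert x^*\Vert=m$, that is $x^*\in A^\circ x$, whence $A^\circ x\ne\varnothing$.

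The step I expect to be the crux is the weak$^*$-closedness of $Ax$: it rests essentially on maximality (monotonicity alone yields only sequential closedness of the graph under additional boundedness hypotheses, as recalled via \cite[Fact 3.5]{BY1}), whereas here the fixed test direction $x-y$ removes any need for boundedness of the approximating net. Once this is in place, the remainder is a routine compactness-plus-lower-semicontinuity argument and requires neither reflexivity of $X$ nor strict convexity of $X^*$.
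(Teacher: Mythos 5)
Your proof is correct, and it takes a route that differs from the paper's in one meaningful way. The paper runs a minimizing-sequence argument: it picks $x^*_n \in Ax$ with $\Vert x^*_n\Vert \le \alpha + \frac{1}{n}$, where $\alpha = \inf_{y^*\in Ax}\Vert y^*\Vert$, extracts a weak$^*$ convergent subnet via Banach--Alaoglu, gets $\Vert x^*\Vert \le \alpha$ from weak$^*$ lower semicontinuity of the norm, and then certifies $x^*\in Ax$ by citing the closedness of ${Gr}(A)$ under eventually bounded nets (\cite[Section 2]{BFG}). You instead prove, directly from maximality, that the value set $Ax$ is weak$^*$-closed --- the monotonicity inequalities $\langle x^*_i - y^*, x-y\rangle \ge 0$ pass to the weak$^*$ limit because the test vector $x-y$ is fixed, so no boundedness is needed --- and then run a compactness-plus-lower-semicontinuity attainment argument on the truncated set $Ax \cap \{y^*\in X^*:\Vert y^*\Vert\le r\}$. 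The underlying tools (Banach--Alaoglu, weak$^*$ lower semicontinuity of the dual norm, and the fact that weak$^*$ limits of elements of $Ax$ stay in $Ax$) are the same in both proofs; what your route buys is self-containedness and a sharper structural observation: $Ax$ is an intersection of weak$^*$-closed half-spaces indexed by ${Gr}(A)$, hence weak$^*$-closed outright, whereas the paper invokes a closed-graph property for bounded nets with moving base points, which is more machinery than is needed when the base point is frozen at $x$. Both arguments are valid and of comparable length.
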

	\begin{proof} Let $x \in {D}(A)$. Then $\alpha:= \inf_{x^* \in Ax}\Vert x^*\Vert<+\infty$.   For every $n \in \mathbb{N}$, we take $x^*_n \in Ax$ such that 
		\begin{equation}\label{3.1}
			\Vert x^*_n\Vert \le  \alpha +\frac{1}{n}.
		\end{equation}
		By Banach-Alaoglu theorem, there exist a subnet $(x^*_i)_{i \in I}$ of $(x^*_n)_{n \in \mathbb{N}}$ and $x^* \in X^*$ such that $x^*_i \overset{\ope{w}^*}{\rightharpoonup} x^*.$ It follows from inequality \eqref{3.1} that $\Vert x^*\Vert \le \displaystyle\liminf_{i }\Vert x^*_i\Vert \le \alpha.$ On the other hand, also from inequality \eqref{3.1}, the subnet $(x^*_i)_{i \in I}$ is eventually bounded. Owing to \cite[Section 2]{BFG}, we have $x^* \in Ax$. This implies $\Vert x^* \Vert \ge \alpha$ and hence $\Vert x^*\Vert = \alpha$,  so we get $x^* \in A^\circ x$, proving the lemma.
	\end{proof}
	The following properties of  $J_{\varepsilon}$ are useful later. 
	\begin{lemma}\label{L.1} Let $x \in X$ and $\varepsilon \ge 0, \lambda >0$. Then 
		\begin{itemize}
			\item[(i)] $J_{\varepsilon}(-x) = -J_{\varepsilon}(x), \,\, \lambda J_{\varepsilon}(x)= J_{\lambda^2\varepsilon}(\lambda x);$
			\item[(ii)] $\underset{x^* \in J_{\varepsilon}(x)}{\sup}\vert \Vert x\Vert -\Vert  x^*\Vert\vert \le \sqrt{2\varepsilon};$
			\item[(iii)]  $\underset{x^* \in J_{\varepsilon}(x)}{\sup}\vert \langle x^*, x \rangle -\Vert x\Vert^2 \vert \le  \sqrt{\varepsilon}(1+ \frac{1}{2}\Vert x\Vert^2).$
		\end{itemize} 
	\end{lemma}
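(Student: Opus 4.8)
\emph{Plan.} All three statements flow from one characterization of the $\varepsilon$-subgradients of $f=\tfrac12\Vert\cdot\Vert^2$, which I would establish first. Since the Fenchel conjugate of $y\mapsto\tfrac12\Vert y\Vert^2$ is $x^*\mapsto\tfrac12\Vert x^*\Vert^2$, taking the infimum over $y$ in the defining inequality $\tfrac12\Vert y\Vert^2\ge\tfrac12\Vert x\Vert^2+\langle x^*,y-x\rangle-\varepsilon$ turns it into the equivalent relation
\[
x^*\in J_\varepsilon(x)\quad\Longleftrightarrow\quad \langle x^*,x\rangle\ge\tfrac12\Vert x\Vert^2+\tfrac12\Vert x^*\Vert^2-\varepsilon .
\]
The forward implication uses $\inf_y\big(\tfrac12\Vert y\Vert^2-\langle x^*,y\rangle\big)=-\tfrac12\Vert x^*\Vert^2$, and the converse is just the Fenchel--Young inequality $\langle x^*,y\rangle\le\tfrac12\Vert y\Vert^2+\tfrac12\Vert x^*\Vert^2$. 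This symmetric form is what makes the rest routine.

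For (i), I would read both identities off this relation. The right-hand condition is invariant under $(x,x^*)\mapsto(-x,-x^*)$, which gives $J_\varepsilon(-x)=-J_\varepsilon(x)$; replacing $(x,x^*,\varepsilon)$ by $(\lambda x,\lambda x^*,\lambda^2\varepsilon)$ and dividing through by $\lambda^2$ gives $\lambda J_\varepsilon(x)=J_{\lambda^2\varepsilon}(\lambda x)$ for $\lambda>0$. For (ii), I would combine the characterization with Cauchy--Schwarz $\langle x^*,x\rangle\le\Vert x^*\Vert\,\Vert x\Vert$ to obtain $\varepsilon\ge\tfrac12\big(\Vert x\Vert-\Vert x^*\Vert\big)^2$, that is $\big|\Vert x\Vert-\Vert x^*\Vert\big|\le\sqrt{2\varepsilon}$, uniformly over $x^*\in J_\varepsilon(x)$.

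Part (iii) is where the work lies, and the difficulty is getting a two-sided bound with the sharp constant: combining the characterization with (ii) yields one inequality cleanly but forces an awkward case split (around $\Vert x\Vert\le\sqrt{2\varepsilon}$) for the other direction. I would sidestep this by testing the subgradient inequality only along the line $y=tx$, $t\in\mathbb{R}$. This produces a quadratic $g(t)=\tfrac12\Vert x\Vert^2 t^2-\langle x^*,x\rangle\,t+\big(\langle x^*,x\rangle-\tfrac12\Vert x\Vert^2+\varepsilon\big)$ that is nonnegative for every real $t$; when $x\neq0$ its nonnegative-discriminant condition collapses exactly to
\[
\big(\langle x^*,x\rangle-\Vert x\Vert^2\big)^2\le 2\varepsilon\,\Vert x\Vert^2 ,
\]
so $\big|\langle x^*,x\rangle-\Vert x\Vert^2\big|\le\sqrt{2\varepsilon}\,\Vert x\Vert$ (the case $x=0$ being trivial). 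Finishing is a one-line estimate: $\sqrt{2}\,\Vert x\Vert\le 1+\tfrac12\Vert x\Vert^2$ because $1+\tfrac12\Vert x\Vert^2-\sqrt2\,\Vert x\Vert=\tfrac12\big(\Vert x\Vert-\sqrt2\big)^2\ge0$, whence $\sqrt{2\varepsilon}\,\Vert x\Vert\le\sqrt{\varepsilon}\,\big(1+\tfrac12\Vert x\Vert^2\big)$, which is the claimed bound. The main obstacle is thus purely the constant in (iii); the line-restriction/discriminant device is the one non-mechanical step and is what delivers both signs at once.
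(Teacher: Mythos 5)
Your proposal is correct, and it is organized differently from the paper's proof. The paper works piecemeal: it proves (i) by manipulating the defining inequality directly (replacing $y$ by $-y$ for the symmetry identity, multiplying through by $\lambda^2$ and applying the resulting inclusion twice for the scaling identity), it does not prove (ii) at all but cites Marques--Svaiter for the estimate $\vert\Vert x^*\Vert-\Vert x\Vert\vert\le\sqrt{2\varepsilon}$, and for (iii) it restricts the subgradient inequality to the line $y=(1+t)x$ and substitutes $t=\pm\sqrt{\varepsilon}$ into the resulting quadratic inequality. You instead derive everything from the single Fenchel-type characterization $x^*\in J_\varepsilon(x)\Longleftrightarrow\langle x^*,x\rangle\ge\tfrac12\Vert x\Vert^2+\tfrac12\Vert x^*\Vert^2-\varepsilon$ (whose proof, via the conjugate of $\tfrac12\Vert\cdot\Vert^2$ and Fenchel--Young, is sound), which makes (i) a symmetry/scaling invariance of that relation and gives a one-line, self-contained proof of (ii) from $\varepsilon\ge\tfrac12\bigl(\Vert x\Vert-\Vert x^*\Vert\bigr)^2$ --- a genuine gain over the paper, which outsources (ii) to the literature. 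For (iii) your device coincides with the paper's (restriction to the line through $x$), but where the paper plugs in the specific values $t=\pm\sqrt{\varepsilon}$, you impose the nonnegative-discriminant condition of the quadratic, obtaining the sharper intermediate bound $\vert\langle x^*,x\rangle-\Vert x\Vert^2\vert\le\sqrt{2\varepsilon}\,\Vert x\Vert$ before weakening it via $\sqrt{2}\,\Vert x\Vert\le 1+\tfrac12\Vert x\Vert^2$. Both finishes are valid; yours isolates the optimal constant on the line at the cost of a case split at $x=0$, while the paper's choice of $t$ is shorter and reaches the stated constant directly.
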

	
	\begin{proof} (i) Let $x \in X, \varepsilon \ge 0, \lambda >0$ and $x^* \in J_{\varepsilon}(-x).$
		It follows from the definition of $J_{\varepsilon}$ that
		\begin{equation}\label{eq1} \frac{1}{2}\Vert y\Vert^2 + \varepsilon \ge \frac{1}{2}\Vert x\Vert^2 + \langle  x^*, y+x \rangle, \, \forall y \in X.
		\end{equation}
		This implies 
		$$ \frac{1}{2}\Vert y\Vert^2 + \varepsilon \ge \frac{1}{2}\Vert x\Vert^2 + \langle - x^*, y -x \rangle, \forall y \in X,$$
		which gives $-x^* \in J_{\varepsilon}(x)$. Hence, we have the inclusion 
		$$J_{\varepsilon}(-x) \subset -J_{\varepsilon}(x), \forall x \in X.$$ 
		Switching the roles of $x$ and $-x$, we also get $-J_{\varepsilon}(x)\subset J_{\varepsilon}(-x)$, implying the former. For the latter, multiplying both side of \eqref{eq1} by $\lambda^2$, we get
		\begin{equation*}
			\frac{1}{2}\Vert \lambda y\Vert^2 +\lambda^2\varepsilon \ge \frac{1}{2}\Vert \lambda x\Vert^2 + \langle \lambda x^*, \lambda y - \lambda x\rangle, \forall y \in X.
		\end{equation*}
		This yields 
		\begin{equation}
			\frac{1}{2}\Vert y\Vert^2 +\lambda^2\varepsilon \ge \frac{1}{2}\Vert \lambda x\Vert^2 + \langle \lambda x^*, y - \lambda x\rangle, \forall y \in X.
		\end{equation}
		Then  $\lambda x^* \in J_{\lambda^2\varepsilon}(\lambda x)$ implies that
		\begin{equation}\label{c.1}
			\lambda J_{\varepsilon}(x) \subset J_{\lambda^2\varepsilon}(\lambda x), \,\, \forall x \in X, \forall \varepsilon \ge 0, \forall \lambda >0.
		\end{equation} 
		This leads to
		\begin{equation}\label{c.2}
			\frac{1}{\lambda}J_{\lambda^2\varepsilon}(\lambda x) \subset J_{(\frac{1}{\lambda})^2(\lambda^2\varepsilon)}(\frac{1}{\lambda}\lambda x)= J_{\varepsilon}(x),\,\, \forall x \in X,\forall \varepsilon \ge 0, \forall \lambda >0.
		\end{equation}
		Combining \eqref{c.1} and \eqref{c.2}, we get the latter. 
		
		(ii) This follows from 
		$$ \vert\Vert x^*\Vert - \Vert x\Vert\vert \le \sqrt{2\varepsilon}$$ due to 
		\cite{MaSv}.
		
		(iii) This is obvious for $\varepsilon=0.$ For $\varepsilon >0$ the definition of $J_{\varepsilon}$ implies that 
		$$ \frac{1}{2}\Vert (1+t)x\Vert^2 \ge \frac{1}{2}\Vert x\Vert^2 + t\langle x^*, x\rangle - \varepsilon,\,\,\forall t \in \mathbb{R}.$$
		This is equivalent to
		$$ t(\langle x^*,x\rangle - \Vert x\Vert^2) \le \varepsilon +  \frac{t^2}{2}\Vert x \Vert^2,\,\, \forall t \in \mathbb{R}.$$
		%which yields that 
		%$$ \vert \langle x^*,x\rangle - \Vert x\Vert^2 \vert \le \frac{\varepsilon +  \frac{t^2}{2}\Vert x \Vert^2}{\vert t \vert}, \,\, \forall t \ne 0.$$
		In particular, for $t= \sqrt{\varepsilon}$ we get
		$$ \vert \langle x^*,x\rangle - \Vert x\Vert^2 \vert \le \sqrt{\varepsilon}\left(1+\frac{1}{2}\Vert x \Vert^2\right),$$
		which proves (iii).
	\end{proof}
	
	The following lemma is an extension  of  \cite[Proposition 2.2]{Ba} 
	on the Yosida approximation of maximal monotone operators in reflexive Banach spaces.
	\begin{lemma}\label{L.KEY}
		Let $X$ be a Banach space whose dual space is strictly convex, $A: X \rightrightarrows X^*$ be a maximal monotone operator of type (D). 	
		For every $x \in {D}(A)$ and a sequence $\lambda_n \downarrow 0$ there exist
		a subnet $(\lambda_i)_{i\in I}$ of $(\lambda_n)_{n \in \mathbb{N}}$, a net $(x_i)_{i \in I} \subset {D}(A)$ converging to $x$,  a net $(z^*_i)_{i\in  I} \subset X^*$ with $z^*_i \in J_{\lambda^6_i}(x-x_i)$ such that
		$(\lambda^{-1}_iz^*_i)_{i \in I}$ is bounded and
		\begin{equation}\label{KEY}
			\lambda^{-1}_iz^*_i \in Ax_i, \,\,\, \, \Vert \lambda^{-1}_iz^*_i\Vert \to  \Vert A^\circ x\Vert, \,\,\, \lambda_i^{-1} z^*_i \overset{\ope{w}^*}{\rightharpoonup} A^\circ x. 
		\end{equation}
	\end{lemma}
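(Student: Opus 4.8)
The plan is to construct the asserted nets by applying the surjectivity characterisation of type (D) operators (Theorem~\ref{D.eq}) with a carefully calibrated scaling, and then to read off the convergences from monotonicity combined with the quantitative estimates for $J_\varepsilon$ in Lemma~\ref{L.1}.

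First I would fix $x \in {D}(A)$ and set $x^* := A^\circ x$, which exists by Lemma~\ref{mlm} and is a singleton since $X^*$ is strictly convex. For each $n$ I would invoke Theorem~\ref{D.eq} at the base point $x$ with parameters $\lambda = \lambda_n^{-1}$ and $\varepsilon = \lambda_n^6$: because ${R}(\lambda_n^{-1} J_{\lambda_n^6}(\cdot - x) + A) = X^*$, in particular $0$ lies in this range, so there exist $x_n \in {D}(A)$, $u_n^* \in J_{\lambda_n^6}(x_n - x)$ and $a_n^* \in A x_n$ with $\lambda_n^{-1} u_n^* + a_n^* = 0$. Putting $z_n^* := -u_n^*$, part (i) of Lemma~\ref{L.1} gives $z_n^* \in J_{\lambda_n^6}(x - x_n)$, and by construction $\lambda_n^{-1} z_n^* = a_n^* \in A x_n$. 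This is the point where the exponent $6$ and the factor $\lambda_n^{-1}$ are chosen precisely so that, via the homogeneity $\lambda J_\varepsilon(\cdot) = J_{\lambda^2\varepsilon}(\lambda\,\cdot)$ and the oddness $J_\varepsilon(-\cdot) = -J_\varepsilon(\cdot)$ of Lemma~\ref{L.1}(i), the selected element of $A x_n$ is exactly $\lambda_n^{-1} z_n^*$ with $z_n^* \in J_{\lambda_n^6}(x - x_n)$.

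Next I would prove $x_n \to x$. Writing $a_n^* = \lambda_n^{-1} z_n^*$ and $t_n := \Vert x_n - x\Vert$, monotonicity of $A$ applied to $(x_n, a_n^*)$ and $(x, x^*)$ gives $\langle a_n^*, x_n - x\rangle \ge \langle x^*, x_n - x\rangle \ge -\Vert x^*\Vert\, t_n$. On the other hand $\langle a_n^*, x_n - x\rangle = -\lambda_n^{-1}\langle z_n^*, x - x_n\rangle$, and Lemma~\ref{L.1}(iii) with $\varepsilon = \lambda_n^6$ bounds $\langle z_n^*, x - x_n\rangle$ below by $t_n^2 - \lambda_n^3(1 + \tfrac12 t_n^2)$. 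Combining these yields the quadratic inequality
\begin{equation*}
\bigl(\lambda_n^{-1} - \tfrac12\lambda_n^2\bigr)\, t_n^2 \le \Vert x^*\Vert\, t_n + \lambda_n^2 .
\end{equation*}
Since the leading coefficient blows up like $\lambda_n^{-1}$, the largest admissible root tends to $0$, so $t_n \to 0$; moreover, setting $s_n := \lambda_n^{-1} t_n$ and dividing through by $\lambda_n$ turns the inequality into $s_n^2(1 - \tfrac12\lambda_n^3) \le \Vert x^*\Vert\, s_n + \lambda_n$, whence $\limsup_n s_n \le \Vert x^*\Vert$.

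Finally I would establish boundedness and the limit relations. Lemma~\ref{L.1}(ii) gives $\Vert z_n^*\Vert \le t_n + \sqrt2\,\lambda_n^3$, so $\Vert a_n^*\Vert = \lambda_n^{-1}\Vert z_n^*\Vert \le s_n + \sqrt2\,\lambda_n^2$; hence $(a_n^*)$ is bounded and $\limsup_n \Vert a_n^*\Vert \le \Vert x^*\Vert = \Vert A^\circ x\Vert$. By Banach--Alaoglu I would pass to a weak$^*$ convergent subnet $a_i^* \oow \bar a^*$, with a corresponding subnet $(\lambda_i)$ of $(\lambda_n)$ and nets $(x_i),(z_i^*)$; since $x_i \to x$ and $(a_i^*)$ is (eventually) bounded, the closedness property \cite[Fact~3.5]{BY1} gives $\bar a^* \in Ax$. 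Weak$^*$ lower semicontinuity of the norm then yields $\Vert\bar a^*\Vert \le \liminf_i \Vert a_i^*\Vert \le \limsup_i\Vert a_i^*\Vert \le \Vert A^\circ x\Vert$, while $\bar a^* \in Ax$ forces $\Vert\bar a^*\Vert \ge \Vert A^\circ x\Vert$; thus $\Vert a_i^*\Vert \to \Vert A^\circ x\Vert$ and $\Vert\bar a^*\Vert = \Vert A^\circ x\Vert$. As $X^*$ is strictly convex, the minimal-norm element of the closed convex set $Ax$ is unique, so $\bar a^* = A^\circ x$, which delivers \eqref{KEY}. I expect the main obstacle to be the calibration in the first step—choosing $\lambda = \lambda_n^{-1}$ and $\varepsilon = \lambda_n^6$ so that the surjectivity output lands in $J_{\lambda_n^6}(x - x_n)$ with exactly the factor $\lambda_n^{-1}$—together with extracting both $t_n \to 0$ and $\limsup_n \lambda_n^{-1} t_n \le \Vert A^\circ x\Vert$ from the single quadratic inequality.
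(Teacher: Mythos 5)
Your proposal is correct and follows essentially the same route as the paper's own proof: the same application of Theorem~\ref{D.eq} with $\lambda=\lambda_n^{-1}$, $\varepsilon=\lambda_n^{6}$, the sign flip via Lemma~\ref{L.1}(i), the monotonicity inequality against $A^\circ x$ combined with Lemma~\ref{L.1}(ii)--(iii) to get the quadratic bound on $\lambda_n^{-1}\Vert x-x_n\Vert$, and then Banach--Alaoglu plus the bounded-net closedness of ${Gr}(A)$ and strict convexity of $X^*$ to identify the weak$^*$ limit as $A^\circ x$. The only differences are cosmetic (your quadratic inequality carries the sharper constant $\tfrac12\lambda_n^3$ where the paper uses $\lambda_n^3$).
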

	\begin{proof} Let $x \in {D}(A)$ and a sequence $\lambda_n \downarrow 0$. By Theorem \ref{D.eq}, for every $n \in \mathbb{N}$ we can find $x_n \in X$ such that 
		$$ 0 \in \lambda^{-1}_nJ_{\lambda^6_n}(x_n-x) + Ax_n.
		$$
		By Lemma \ref{L.1}~(i), there exists a sequence $(z^*_n)_{n\in \mathbb{N}} \subset X^*$ such that $z^*_n \in J_{\lambda^6_n}(x-x_n)$ and
		\begin{equation}\label{l.4.2.1}
			\lambda^{-1}_nz^*_n \in Ax_n,\,\, \forall n\in \mathbb{N}.
		\end{equation}
		Due to the monotonicity of $A$ and Lemma \ref{mlm}, the set $A^\circ(x)$ is nonempty and
		\begin{equation}\label{a0} \langle \lambda^{-1}_nz^*_n -A^\circ x, x -x_n\rangle \le 0, 
		\end{equation}
		which implies that 
		\begin{equation}\label{l.4.2.1.1}
			\lambda^{-1}_n\langle z^*_n, x-x_n\rangle \le \langle A^\circ x, x- x_n\rangle \le \Vert A^\circ x\Vert\left\Vert x_n-x\right\Vert.
		\end{equation}
		Since $z^*_n \in J_{\lambda^6_n}(x-x_n)$, Lemma \ref{L.1}~(iii) gives us
		$$ \langle z^*_n, x-x_n \rangle \ge \Vert x-x_n\Vert^2-\lambda^3_n\left(1+\frac{1}{2}\Vert x-x_n\Vert^2\right) \ge \Vert x-x_n\Vert^2-\lambda^3_n(1+\Vert x-x_n\Vert^2).$$ 
		It follows from \eqref{l.4.2.1.1} that
		$$ \lambda^{-1}_n\Big(\Vert x-x_n\Vert^2 -\lambda^3_n( 1+\Vert x-x_n\Vert^2) \Big) \le  \Vert A^\circ x\Vert\left\Vert x-x_n\right\Vert.$$
		Multiplying both sides by $\lambda^{-1}_n$ and then adding $\lambda_n$, we get
		\begin{equation*}
			(1-\lambda^3_n)(\lambda^{-1}_n\Vert x-x_n\Vert)^2 \le \Vert A^\circ x\Vert(\lambda^{-1}_n\Vert x-x_n\Vert) + \lambda_n,
		\end{equation*}
		which yields
		\begin{equation}\label{l.4.2.2.1}
			\lambda^{-1}_n\Vert x-x_n\Vert \le \frac{\Vert A^\circ x \Vert + \sqrt{\Vert A^\circ x \Vert^2 +4\lambda_n(1-\lambda^3_n)}}{2(1-\lambda^3_n)}
		\end{equation}
		for sufficiently large $n$, 
		and hence
		\begin{equation}\label{l.4.2.2}
			\underset{n \to \infty}{\lim\sup}\,\, \lambda^{-1}_n\Vert x-x_n \Vert \le \Vert A^\circ x\Vert.
		\end{equation}
		Furthermore, by Lemma \ref{L.1}~(ii),  
		$$ \Vert z^*_n\Vert \le \Vert x-x_n\Vert +\sqrt{2}\lambda^3_n, \,\,\forall n\in \mathbb{N}. $$ 
		Together with  \eqref{l.4.2.2}, we get 
		\begin{equation}\label{l.4.2.4}
			\underset{n \to \infty}{\lim\sup}\,\, \Vert \lambda^{-1}_n z^*_n \Vert \le \underset{n \to \infty}{\lim\sup}\,\,(\lambda^{-1}_n\Vert x-x_n\Vert+\sqrt{2}\lambda^2_n) \le \Vert A^\circ x\Vert.
		\end{equation}
		This shows that $(\lambda^{-1}_nz^*_n)_{n \in \mathbb{N}}$ is bounded. Using Banach-Alaoglu theorem, there exist $y^* \in X^*$ and a subnet $(\lambda^{-1}_i z^*_i)_{i \in I}$ of $(\lambda^{-1}_n z^*_n)_{n \in \mathbb{N}}$ satisfying
		\begin{equation}\label{l.4.2.5}
			\lambda^{-1}_i z^*_i \overset{\ope{w}^*}{\rightharpoonup} y^*.
		\end{equation}
		It follows from \eqref{l.4.2.4} that 
		\begin{equation}\label{l.4.2.6}
			\Vert y^*\Vert \le \underset{i}{\lim\inf}\,\Vert \lambda^{-1}_i z^*_i\Vert \le \underset{i}{\lim\sup}\,\Vert \lambda^{-1}_i z^*_i\Vert  \le \Vert A^\circ x\Vert.
		\end{equation}
		On the other hand, owing to  \eqref{l.4.2.2.1},  $x_n \to x \text{ as } n \to \infty$, and hence
		$ x_i\to x.$ It follows from  \eqref{l.4.2.4} and \eqref{l.4.2.5} that $(\lambda^{-1}_iz^*_i)_{i\in I}$ is bounded and  
		\begin{equation}\label{l.4.2.3.1}
			\lambda^{-1}_iz^*_i \in Ax_i, \lambda^{-1}_iz^*_i \overset{\ope{w}^*}{\rightharpoonup} y^*, \,\,  x_i \to x.
		\end{equation}
		Then $y^* \in Ax$ by the maximal
		monotonicity of $A$. From \eqref{l.4.2.6}, we get $y^*= A^\circ x$ since $A^\circ x$ is a singleton set due to the strict convexity of $X^*$. Hence, we obtain from \eqref{l.4.2.6} that $ \Vert \lambda^{-1}_i z^*_i\Vert \to \Vert A^\circ x\Vert$, completing the proof.
	\end{proof}
	Recall that if $A$ is a maximal monotone operator, then $Ax$ is convex and closed in $X^*$ for $x\in{D}(A)$. In order to represent $Ax$ by means of the values at nearby points of $x$, we establish a relation between the values at nearby points of $x$ and the faces $A(x;v)$ of $Ax$ with respect to vector $v \in X \setminus \{0\},$ which is defined by
	$$ A(x;v):= \Big\{y^*\in Ax: \langle y^*, v\rangle= \underset{x^* \in Ax}{\sup} \langle x^*, v \rangle\Big\}.$$ This relation depends on the  properties of  $X$.  
	Next we introduce a new class of Banach spaces. 
	\begin{definition}\rm Let $X$ be a Banach space.  
		\begin{itemize} 
			\item[(a)] $X$ is said to have the {\em w-Kadec-Klee property} if for  any net $(x_i)_{i \in I} $, $x_i \overset{\ope{w}}{\to} x$ and $\Vert x_i\Vert \to \Vert x\Vert$, then $x_i \to x$.
			\item [(b)]  $X^*$ is said to have the  {\em $w^*$-Kadec-Klee property} if for any net $(x^*_i)_{i\in I} $,  $x^*_i \overset{\ope{w}^*}{\rightharpoonup} x^*$  and $\Vert x^*_i\Vert \to \Vert x^*\Vert$, then $x^*_i \to x^*$.
		\end{itemize}
	\end{definition}
	The following lemma shows that the class of Banach spaces with $w^*$-Kadec-Klee property
	includes locally uniformly convex Banach spaces. 
	\begin{lemma}\label{l.m.t} Let $X$ be a Banach space. Then,
		
		(a) If $X$ is locally uniformly convex, then $X$ has w-Kadec-Klee property;
		
		(b) If $X^*$ is locally uniformly convex, then $X^*$ has $w^*$-Kadec-Klee property.
	\end{lemma}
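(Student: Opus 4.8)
The plan is to prove both parts by a single scheme: reduce to the case of unit vectors by normalization, and then derive a contradiction from the definition of local uniform convexity together with the weak (respectively weak$^*$) lower semicontinuity of the norm. I will carry out part (a) in detail; part (b) is the verbatim analogue, with weak convergence replaced by weak$^*$ convergence and the ambient space $X$ replaced by $X^*$.

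For (a), let $(x_i)_{i\in I}$ be a net with $x_i \overset{\ope{w}}{\to} x$ and $\Vert x_i\Vert \to \Vert x\Vert$. If $x=0$ then $\Vert x_i\Vert \to 0$ gives $x_i \to x$ at once, so I assume $x \ne 0$. Since $\Vert x_i\Vert \to \Vert x\Vert >0$, eventually $\Vert x_i\Vert >0$, and passing to that tail I set $u:= x/\Vert x\Vert$ and $u_i:= x_i/\Vert x_i\Vert$. Testing against any $f \in X^*$ and using $\langle f, x_i\rangle \to \langle f,x\rangle$ together with $\Vert x_i\Vert \to \Vert x\Vert$ shows $u_i \overset{\ope{w}}{\to} u$, while $\Vert u_i\Vert = \Vert u\Vert =1$.

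The core step is to show $u_i \to u$ in norm. Suppose not; then there exist $\varepsilon >0$ and a subnet (still indexed by $i$) with $\Vert u_i - u\Vert > \varepsilon$ for all $i$. Local uniform convexity applied to the unit vector $u$ yields $\delta = \delta(u,\varepsilon) >0$ with $\Vert u_i + u\Vert < 2-\delta$ for every $i$ in this subnet. On the other hand $u_i + u \overset{\ope{w}}{\to} 2u$, and since the norm is weakly lower semicontinuous,
\[
2 = \Vert 2u\Vert \le \liminf_{i}\,\Vert u_i + u\Vert \le 2-\delta,
\]
a contradiction. Hence $u_i \to u$, and finally
\[
\Vert x_i - x\Vert \le \Vert x_i\Vert\,\Vert u_i - u\Vert + \big|\Vert x_i\Vert - \Vert x\Vert\big| \to 0,
\]
because $(\Vert x_i\Vert)$ is bounded, which establishes the w-Kadec-Klee property.

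For (b) the argument is identical after replacing $\overset{\ope{w}}{\to}$ by $\overset{\ope{w}^*}{\rightharpoonup}$: from $x^*_i \overset{\ope{w}^*}{\rightharpoonup} x^*$ and $\Vert x^*_i\Vert \to \Vert x^*\Vert$ one normalizes to unit vectors $u^*_i, u^*$ of $X^*$, uses local uniform convexity of $X^*$ to obtain $\Vert u^*_i + u^*\Vert < 2-\delta$ on a bad subnet, and contradicts this with the weak$^*$ lower semicontinuity of the dual norm, which gives $\Vert 2u^*\Vert \le \liminf_i \Vert u^*_i+u^*\Vert$. The only point requiring care, and the one I regard as the main obstacle, is to confirm these semicontinuity statements for \emph{nets} rather than sequences; but this is immediate, since in each case the norm is a supremum of functions that are continuous for the relevant topology (the evaluations $y\mapsto \langle f,y\rangle$ with $\Vert f\Vert\le 1$ in (a), and $y^*\mapsto \langle y^*, z\rangle$ with $\Vert z\Vert \le 1$ in (b)), hence lower semicontinuous along nets.
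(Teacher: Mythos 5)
Your proof is correct and takes essentially the same approach as the paper's: normalize to unit vectors, suppose norm convergence fails along a subnet, and derive a contradiction between local uniform convexity and the weak (resp.\ weak$^*$) lower semicontinuity of the norm. The paper simply defers part (a) to Z\u{a}linescu \cite[Proposition 3.7.6]{Za} and proves part (b) by this very normalization-plus-contradiction argument, so the two proofs coincide in substance.
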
 
	See \cite[Proposition 3.7.6]{Za} for a proof of (a). The proof of (b) is similar.
	
	The next result discovers a relation between the faces of $Ax$ 
	and the set consisting of the weak$^*$ convergent limits of bounded nets of $A$ at nearby points of $x$.
	In what follows  $j_{\lambda}(w)$ means an element of  $J_{\lambda}(w)$. 
	\begin{proposition}\label{T.s4.1}
		Let $X$ be a Banach space whose dual space is  strictly convex and let $A: X \rightrightarrows X^*$ be a maximal monotone operator of type (D). Then
		for  $x \in {D}(A)$ and $v \in X \setminus \{0\}$ 
		\begin{equation}\label{T.4.1.1}
			A(x;v) \subset \ope{bw}^*{-}\underset{\lambda \to 0\atop \underset{j_{\lambda}(w) \overset{\ope{w}^*}{\rightharpoonup} J(v)}{\Vert j_{\lambda}(w)\Vert \to \Vert J(v)\Vert}}{\Lim\sup}\,\, A(x+\lambda w).
		\end{equation}
		If in addition that $X^*$ has the $w^*$-Kadec-Klee property, then for $x \in {D}(A)$ and $v \in X \setminus \{0\}$
		\begin{equation}\label{T.4.1.2}
			A(x; v) \subset \ope{s}-\underset{\lambda \to 0 \atop j_{\lambda}(w) \to J(v)}{\Lim\sup} A(x+\lambda w).
		\end{equation}	
	\end{proposition}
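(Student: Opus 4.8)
The plan is to fix $y^*\in A(x;v)$ and construct a single bounded net, indexed along a sequence $\lambda_n\downarrow 0$, that witnesses the membership of $y^*$ in the right-hand side of \eqref{T.4.1.1}. The construction mimics the proof of Lemma~\ref{L.KEY}, but replaces the trivial datum used there by one that encodes both the target value $y^*$ and the target direction $v$. Concretely, I would invoke Theorem~\ref{D.eq}: for each $n$ the operator $\lambda_n^{-1}J_{\lambda_n^6}(\cdot-x)+A$ is surjective, so there is $x_n\in X$ with
$$ y^*+J(v)\in \lambda_n^{-1}J_{\lambda_n^6}(x_n-x)+Ax_n, $$
where $J(v)$ is a single point because $X^*$ is strictly convex. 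Writing $z_n^*\in J_{\lambda_n^6}(x_n-x)$ for the corresponding selection, set $x_n^*:=y^*+J(v)-\lambda_n^{-1}z_n^*\in Ax_n$ and $w_n:=\lambda_n^{-1}(x_n-x)$. By the scaling identity of Lemma~\ref{L.1}(i) one checks $\lambda_n^{-1}z_n^*\in J_{\lambda_n^4}(w_n)\subset J_{\lambda_n}(w_n)$, so $\lambda_n^{-1}z_n^*$ is an admissible choice of $j_{\lambda_n}(w_n)$; this is the quantity whose behaviour must be matched to $J(v)$.

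Next come the a priori estimates. Since $y^*\in Ax$, monotonicity of $A$ gives $\langle x_n^*-y^*,\,x_n-x\rangle\ge 0$, i.e. $\langle J(v),\,x_n-x\rangle\ge \lambda_n^{-1}\langle z_n^*,\,x_n-x\rangle$. Bounding the left side by $\|v\|\,\|x_n-x\|$ and the right side from below with Lemma~\ref{L.1}(iii), then solving the resulting quadratic inequality exactly as in \eqref{l.4.2.2.1}, yields $\limsup_n \lambda_n^{-1}\|x_n-x\|\le\|v\|$. Hence $x_n\to x$, the nets $(x_n^*)$ and $(\lambda_n^{-1}z_n^*)$ are bounded, and $\limsup_n\|\lambda_n^{-1}z_n^*\|\le\|v\|$ by Lemma~\ref{L.1}(ii). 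By Banach--Alaoglu I pass to a subnet along which $x_n^*\oow\bar x^*$ and $\lambda_n^{-1}z_n^*\oow\ell^*$, where necessarily $\ell^*=y^*+J(v)-\bar x^*$; since this subnet lies in $\mathrm{Gr}(A)$, is eventually bounded and satisfies $x_n\to x$, the closedness fact \cite[Fact~3.5]{BY1} gives $\bar x^*\in Ax$.

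The decisive step is the identification $\bar x^*=y^*$, and this is where the face hypothesis enters. Because $\bar x^*\in Ax$ and $y^*\in A(x;v)$, we have $\langle\bar x^*,v\rangle\le\langle y^*,v\rangle$, whence $\langle\ell^*,v\rangle=\langle y^*,v\rangle+\|v\|^2-\langle\bar x^*,v\rangle\ge\|v\|^2$. Together with $\|\ell^*\|\le\liminf_n\|\lambda_n^{-1}z_n^*\|\le\|v\|$ this forces $\|v\|^2\le\langle\ell^*,v\rangle\le\|\ell^*\|\,\|v\|\le\|v\|^2$, so $\ell^*$ satisfies $\langle\ell^*,v\rangle=\|\ell^*\|^2=\|v\|^2$; by strict convexity of $X^*$ the set $J(v)$ is a singleton, hence $\ell^*=J(v)$ and $\bar x^*=y^*$. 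The same chain of equalities yields $\|\lambda_n^{-1}z_n^*\|\to\|v\|=\|J(v)\|$ along the subnet. Thus the subnet $(x+\lambda_n w_n,\,x_n^*)$ has $j_{\lambda_n}(w_n)=\lambda_n^{-1}z_n^*\oow J(v)$ with $\|j_{\lambda_n}(w_n)\|\to\|J(v)\|$, is bounded with $x_n^*\oow y^*$, and has $x+\lambda_n w_n\ne x$ eventually since $\|w_n\|\to\|v\|>0$; this establishes \eqref{T.4.1.1}.

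Finally, for \eqref{T.4.1.2} I would feed the same net into the $w^*$-Kadec-Klee property. We already have $\lambda_n^{-1}z_n^*\oow J(v)$ with $\|\lambda_n^{-1}z_n^*\|\to\|J(v)\|$, so the property upgrades this to $j_{\lambda_n}(w_n)=\lambda_n^{-1}z_n^*\to J(v)$ in norm; since $x_n^*-y^*=J(v)-\lambda_n^{-1}z_n^*$, the same convergence gives $x_n^*\to y^*$ in norm, placing $y^*$ in the strong limit superior. The step I expect to be most delicate is the identification $\bar x^*=y^*$: it is the only place the face structure is used, and it must pin the weak$^*$ limit of the regularizing terms down to exactly $J(v)$ rather than to some arbitrary element of norm at most $\|v\|$; by comparison the approximation bookkeeping (handling the $J_{\lambda_n^6}$ terms and verifying that $\lambda_n^{-1}z_n^*$ is a legitimate $j_{\lambda_n}(w_n)$) is routine.
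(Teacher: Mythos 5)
Your proposal is correct and is essentially the paper's own proof: the paper establishes \eqref{T.4.1.1} by applying Lemma \ref{L.KEY} to the shifted operator $By:=Ay-J(v)-x^*$, whose minimal-norm selection at $x$ is computed to be $-J(v)$ using the face hypothesis, and unwinding that application produces exactly your regularized inclusion $y^*+J(v)\in\lambda_n^{-1}J_{\lambda_n^6}(x_n-x)+Ax_n$, the same quadratic estimate on $\lambda_n^{-1}\Vert x_n-x\Vert$, the same boundedness and weak$^*$ compactness steps via \cite[Fact 3.5]{BY1}, and the same w$^*$-Kadec-Klee upgrade for \eqref{T.4.1.2}. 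The only difference is packaging: you identify the weak$^*$ limit $\ell^*$ of $\lambda_n^{-1}z_n^*$ directly through the characterization $\langle \ell^*,v\rangle=\Vert \ell^*\Vert^2=\Vert v\Vert^2$ of the singleton $J(v)$, whereas the paper reaches the same conclusion through uniqueness of the minimal-norm selection $B^\circ x$ in the strictly convex dual --- the same two ingredients (the face inequality paired against $v$, and strict convexity of $X^*$) in different clothing.
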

	\begin{proof}Let $x \in {D}(A)$, $v \in X \setminus \{0\}$ and $x^* \in A(x;v).$ Since $X^*$ is strictly convex,  $J(v)$ is a singleton set. We consider a maximal monotone operator defined by
		$$ By:= Ay-J(v)-x^*, \,\, \forall \, y \in X.$$
		It is obvious that ${D}(B)= {D}(A)$, and hence $x \in {D}(B)$. We first claim that 
		\begin{equation}\label{t.4.1.0}
			B^\circ x= -J(v).
		\end{equation}
		Indeed, observe that
		\begin{equation}\label{t.4.1.1} 
			-J(v) = x^*-J(v)-x^* \in Ax-J(v)-x^* = Bx,
		\end{equation}
		Moreover, for $y^* \in Ax$, $\langle x^*-y^*, v\rangle \ge 0$ since  $x^* \in A(x;v)$. This implies
		\begin{equation*}
			\begin{split}
				\Vert -J(v)\Vert & = \Vert v\Vert^{-1}\langle J(v), v \rangle  \\
				& \le \Vert v\Vert^{-1}\langle J(v)+x^*-y^*, v \rangle \\
				& \le \Vert v\Vert^{-1}\Vert J(v)+x^*-y^*\Vert \Vert v\Vert \\
				& = \Vert y^*-J(v)-x^*\Vert.
			\end{split}
		\end{equation*}
		Now \eqref{t.4.1.0} follows from \eqref{t.4.1.1}.
		
		Next, we apply  Lemma \ref{L.KEY} to have nets $(\lambda_i)_{i\in I} \to 0$, $(x_i)_{i\in I} \subset {D}(B)$ with $x_i \to x$ and $(z^*_i)_{i \in I} \subset X^*$ with $ z^*_i \in J_{\lambda^6_i}(x-x_i)$ such that  $(\lambda^{-1}_iz^*_i)_{i \in I}$ is bounded  
		and \begin{equation*}\label{e.b.2}
			\lambda^{-1}_iz^*_i \in Bx_i, \,\, \,\, \lambda^{-1}_iz^*_i \overset{\ope{w}^*}{\rightharpoonup} B^\circ x = -J(v), \Vert \lambda^{-1}_iz^*_i\Vert \to \Vert J(v) \Vert.
		\end{equation*}
		Define $w_i:= \lambda^{-1}_i(x_i-x), i \in I$. We then have
		$$ (\lambda^{-1}_iz^*_i + J(v)+ x^*)_{i \in I}\,\,  \text{ is bounded},$$
		\begin{equation}\label{t.4.1.3}
			\lambda^{-1}_iz^*_i+ J(v)+ x^* \in Bx_i+J(v)+x^* = Ax_i= A(x+\lambda_iw_i),
		\end{equation}
		\begin{equation}\label{t.4.1.4}
			\lambda^{-1}_iz^*_i+J(v)+ x^* \overset{\ope{w}^*}{\rightharpoonup} x^*,
		\end{equation}
		%		\begin{equation}\label{t.4.1.5}
		%	\lambda_i \to 0,
		%	\end{equation}
		\begin{equation}\label{t.4.1.7} 
			-\lambda_i^{-1}z^*_i \overset{\ope{w}^*}{\rightharpoonup} J(v), \Vert -\lambda_i^{-1}z^*_i\Vert \to \Vert J(v) \Vert.
		\end{equation} 	
		It follows from Lemma \ref{L.1}~(i) that 
		\begin{equation}\label{t.4.1.6} 
			-\lambda^{-1}_iz^*_i \in J_{\lambda^4_i}(\lambda^{-1}_i(x_i-x)) \subset J_{\lambda_i}(\lambda^{-1}_i(x_i-x)) = J_{\lambda_i}(w_i).
		\end{equation}
		Thus 
		$$ x^* \in {\rm bw}^*-\underset{t \to 0\atop \underset{j_t(w) \overset{\ope{w}^*}{\rightharpoonup} J(v)}{\Vert j_t(w)\Vert \to \Vert J(v)\Vert}}{\Lim\sup}\,\, A(x+tw),$$
		and hence \eqref{T.4.1.1} holds.
		
		Now, suppose in addition that $X^*$ has the $\ope{w}^*$-Kadec-Klee property and $x^* \in A(x;v)$ with $x \in {D}(A)$ and $v \ne 0$. By the above result, we find nets $(\lambda_i)_{i \in I} \subset\mathbb{R}_{+},$ $(x_i)_{i \in I}\subset {D}(A)$, $(w_i)_{i\in I} \subset X$ and $(z^*_i)_{i\in I} \in X^*$ satisfying \eqref{t.4.1.3}-\eqref{t.4.1.7}. It follows from \eqref{t.4.1.7} that $-\lambda_i^{-1}z^*_i \to J(v)$, and hence $\lambda^{-1}_iz^*_i+J(v)+ x^* \to x^*.$ This leads to
		$$ x^* \subset \ope{s}-\underset{t \to 0 \atop j_{t}(w) \to J(v)}{\Lim\sup} A(x+tw),$$ completing the proof.
	\end{proof}
	\begin{corollary}\label{C.4}	Let $X$ be a Banach space whose dual space is  strictly convex and let $A: X \rightrightarrows X^*$ be a maximal monotone operator of type (D).
		Then, for $x \in {D}(A)$ and $v \in X \setminus \{0\}$ 
		\begin{equation}\label{C.4.1.1}
			A(x;v) \subset {\rm bw}^*-\underset{y \to x, y\ne x}{\Lim\sup}\,Ay.
		\end{equation}
		If in addition that $X^*$ has the ${w}^*$-Kadec-Klee property, then for $x \in {D}(A)$ and $v \in X \setminus \{0\},$
		\begin{equation}\label{C.4.1.2}
			A(x; v) \subset \ope{s}-\underset{y \to x, y\ne x}{\Lim\sup}\,Ay.
		\end{equation}
	\end{corollary}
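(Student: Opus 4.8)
The plan is to deduce the Corollary directly from Proposition \ref{T.s4.1}: the only real content is that the restricted $\Lim\sup$ appearing on the right of \eqref{T.4.1.1} (resp. \eqref{T.4.1.2}) is contained in the $\Lim\sup$ on the right of \eqref{C.4.1.1} (resp. \eqref{C.4.1.2}). Fix $x \in {D}(A)$, $v \in X\setminus\{0\}$, and take $x^* \in A(x;v)$. By Proposition \ref{T.s4.1} there are nets of positive reals $\lambda_i \to 0$ and elements $w_i \in X$ with $j_{\lambda_i}(w_i)\overset{\ope{w}^*}{\rightharpoonup}J(v)$ and $\Vert j_{\lambda_i}(w_i)\Vert \to \Vert J(v)\Vert$, together with a bounded net $x^*_i \in A(x+\lambda_i w_i)$ satisfying $x^*_i \overset{\ope{w}^*}{\rightharpoonup} x^*$.

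First I would verify that $y_i := x+\lambda_i w_i$ is an admissible approximating net for the right-hand side of \eqref{C.4.1.1}. Applying Lemma \ref{L.1}(ii) to $j_{\lambda_i}(w_i) \in J_{\lambda_i}(w_i)$ gives $\vert \Vert j_{\lambda_i}(w_i)\Vert - \Vert w_i\Vert\vert \le \sqrt{2\lambda_i}\to 0$, so the norm-tracking condition forces $\Vert w_i\Vert \to \Vert J(v)\Vert = \Vert v\Vert$. Hence $(w_i)$ is bounded, whence $\lambda_i w_i \to 0$ and $y_i \to x$; moreover $\Vert v\Vert>0$ yields $w_i \ne 0$, and thus $y_i \ne x$, for all large $i$. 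After passing to a tail we may assume $y_i \ne x$ for every $i$.

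It then follows that $(y_i, x^*_i) \in {Gr}(A)$ satisfies $y_i \to x$, $y_i \ne x$, $(x^*_i)$ bounded, and $x^*_i \overset{\ope{w}^*}{\rightharpoonup} x^*$, which is precisely a witness for $x^*$ belonging to the right-hand side of \eqref{C.4.1.1}; this establishes \eqref{C.4.1.1}. For \eqref{C.4.1.2} I would run the identical argument starting from \eqref{T.4.1.2}: under the $\ope{w}^*$-Kadec-Klee hypothesis the same net additionally satisfies $x^*_i \to x^*$ in norm, while $y_i \to x$ and $y_i \ne x$ as above, so $x^*$ lies in the right-hand side of \eqref{C.4.1.2}.

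I do not anticipate a genuine difficulty, since the Corollary merely relaxes Proposition \ref{T.s4.1} by enlarging the class of admissible approximating nets. The one step requiring care --- and the only place the restricted structure is actually used --- is checking that $y_i \to x$ with $y_i \ne x$: this is exactly where the tracking condition $\Vert j_{\lambda_i}(w_i)\Vert \to \Vert J(v)\Vert$ together with Lemma \ref{L.1}(ii) keeps $(w_i)$ bounded and bounded away from $0$, so that scaling by $\lambda_i \to 0$ collapses the perturbation onto $x$ without it ever coinciding with $x$.
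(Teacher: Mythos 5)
Your proof is correct and takes essentially the same route as the paper: invoke Proposition \ref{T.s4.1}, then use Lemma \ref{L.1}(ii) together with the tracking condition $\Vert j_{\lambda_i}(w_i)\Vert \to \Vert J(v)\Vert$ to show $x+\lambda_i w_i \to x$, so the net produced by the proposition witnesses membership in the unrestricted $\Lim\sup$. You are in fact slightly more careful than the paper, which only records $\limsup_i \Vert w_i\Vert \le \Vert v\Vert$ and never checks the requirement $y_i \ne x$ in the definition of the limit sets; your observation that $\Vert w_i\Vert \to \Vert v\Vert > 0$, so that $w_i \ne 0$ (hence $x + \lambda_i w_i \ne x$) eventually, closes that small gap.
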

	\begin{proof} Let $x \in {D}(A)$, $v \in X \setminus \{0\}$ and $x^* \in A(x;v)$. By \eqref{T.4.1.1}, there exist nets $(\lambda_i)_{i \in I} \to 0,$ $(w_i)_{i \in I} \subset X,$ $(z^*_i)_{i\in I} \subset X^*$ with $z^*_i \in J_{\lambda_i}(w_i)$, $z^*_i \overset{\ope{w}^*}{\rightharpoonup} J(v),$ $\Vert z^*_i \Vert \to \Vert J(v)\Vert$, $(x^*_{i})_{i\in I} \subset X^*$ with $x^*_i \in A(x+\lambda_iw_i)$, such that
		$$(x^*_i)_{i\in I} \text{ is bounded and } x^*_i  \overset{\ope{w}^*}{\rightharpoonup}  x^*.$$
		Next we verify that $x+\lambda_iw_i \to x$, which would imply 
		$$ x^* \in {\rm bw}^*-\underset{y \to x, y\ne x}{\Lim\sup}\,Ay$$
		and hence  \eqref{C.4.1.1} holds.  By Lemma \ref{L.1}~(ii) and $z_i \in J_{\lambda_i}(w_i),$ we have $\Vert w_i\Vert \le \Vert z^*_i \Vert +\sqrt{2\lambda_i}$ for $i \in I$. Therefore 
		$$ \underset{i\in I}{\limsup}\Vert w_i\Vert \le \underset{i\in I}{\sup}(\Vert z^*_i\Vert + \sqrt{2\lambda_i}) = \Vert J(v)\Vert= \Vert v\Vert$$
		implies $x+\lambda_iw_i \to x.$
		The same argument applies to yield  \eqref{C.4.1.2}.
	\end{proof}
	
	Now we are in a position to derive the first representation for the value of a maximal monotone operator of type (D) at a point in its domain.
	\begin{theorem}\label{T.s5.1}
		Let $X$ be a Banach space whose dual space is  strictly convex, and let $A: X \rightrightarrows X^*$ be a maximal monotone operator  of type (D).
		Then for $x\in X$
		\begin{equation}\label{**}
			Ax = \ope{co}\overline{\{\ope{bw}^*-\underset{y \to x, y\ne x}{\Lim\sup}\,Ay\}} + N_{\overline{{D}(A)}}(x).
		\end{equation}
		If in addition that $X^*$ has the ${w}^*$-Kadec-Klee property, then for $x\in X$
		\begin{equation}\label{***}
			Ax = \ope{co}\{\ope{s}-\underset{y \to x, y\ne x}{\Lim\sup}\,Ay\} + N_{\overline{{D}(A)}}(x).
		\end{equation}
	\end{theorem}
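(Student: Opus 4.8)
The plan is to establish both identities by two set inclusions; throughout I write
$S := \ope{bw}^*{-}\underset{y \to x, y\ne x}{\Lim\sup}\,Ay$ and $N := N_{\overline{{D}(A)}}(x)$, and I concentrate on $x\in{D}(A)$. The cases $x\notin{D}(A)$ are vacuous: then $Ax=\varnothing$, and since $S\subset Ax$ (see below) the right-hand side is empty as well. For the inclusion ``$\supseteq$'', take $x^*\in S$, so there is an eventually bounded net $(x_i,x^*_i)\in{Gr}(A)$ with $x_i\to x$ and $x^*_i\oow x^*$; by \cite[Fact 3.5]{BY1}, recalled in Section \ref{sec2}, we get $(x,x^*)\in{Gr}(A)$, whence $S\subset Ax$. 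As $Ax$ is closed and convex, $\ope{co}\,\overline S\subset Ax$, and then \eqref{2.1} gives $\ope{co}\,\overline S+N\subset Ax+N=Ax$. This proves ``$\supseteq$'' in \eqref{**}, and the same computation with the strong-limsup set gives ``$\supseteq$'' in \eqref{***}.

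The substance is the reverse inclusion, for which I would first record that the recession cone of $Ax$ equals $N$. The inclusion $N\subset(Ax)_\infty$ is \eqref{2.1}; conversely, if $d\in(Ax)_\infty$ then, fixing $x^*\in Ax$, testing monotonicity of $(x,x^*+td)$ against every $(y,y^*)\in{Gr}(A)$ and letting $t\to\infty$ yields $\langle d,x-y\rangle\ge0$ for all $y\in\overline{{D}(A)}$, i.e.\ $d\in N$. Hence $\sigma_N(v)=0$ exactly when $v\in N^\circ=(Ax)_\infty^\circ$, and $\sigma_N(v)=+\infty$ otherwise. Now if some $x^*\in Ax$ lay outside the weak$^*$-closure of the convex set $\ope{co}\,\overline S+N$, separating it by a weak$^*$-continuous functional $v\in X$ would give
\[
\langle x^*,v\rangle>\sigma_{\ope{co}\overline S+N}(v)=\sigma_S(v)+\sigma_N(v),
\]
which forces $\sigma_N(v)=0$, i.e.\ $v\in N^\circ$, together with $\sigma_S(v)<\langle x^*,v\rangle\le\sigma_{Ax}(v)$. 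Everything thus reduces to the key claim $\sigma_S(v)\ge\sigma_{Ax}(v)$ for all $v\in N^\circ$. Since $A(x;v)\subset S$ by Corollary \ref{C.4}, whenever the face $A(x;v)$ is nonempty we have $\sigma_S(v)\ge\sup_{w^*\in A(x;v)}\langle w^*,v\rangle=\sigma_{Ax}(v)$, and the claim holds for such $v$.

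The main obstacle is therefore the attainment of the supremum defining $\sigma_{Ax}(v)$, that is, the nonemptiness of $A(x;v)$: in the nonreflexive setting this supremum need not be attained, and it may even equal $+\infty$ for $v\in N^\circ$ lying outside the barrier cone. This is precisely where the \emph{bounded} weak$^*$ limsup is the correct object. I would exploit that bounded subsets of $X^*$ are weak$^*$-compact (Banach--Alaoglu): for $v$ in the barrier cone one approximates $\sigma_{Ax}(v)$ by the maxima over the weak$^*$-compact truncations $Ax\cap r\ballc$, and a Bishop--Phelps type perturbation of $v$ produces nearby directions at which the face is genuinely attained and hence lies in $S$; one then passes to the limit using the convexity, lower semicontinuity and positive homogeneity of the support functions on $X$ together with the density of attaining directions in $N^\circ$. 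This yields $\sigma_S=\sigma_{Ax}$ on $N^\circ$, contradicting the separation above; I also expect this step to force $\ope{co}\,\overline S+N$ to be weak$^*$-closed, which is what allows \eqref{**} to dispense with an outer closure.

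Finally, for \eqref{***} the only change is to replace $S$ throughout by the strong limsup set $\ope{s}{-}\underset{y \to x, y\ne x}{\Lim\sup}\,Ay$ and to invoke \eqref{C.4.1.2} in place of \eqref{C.4.1.1}. The $w^*$-Kadec--Klee property of $X^*$ upgrades the weak$^*$ convergence delivered by Corollary \ref{C.4} to norm convergence, since the norms converge as well, so the faces land in the strong limsup set; because that set is already norm-closed, the convex-geometric argument runs verbatim and no closure bar on $\ope{co}$ is required.
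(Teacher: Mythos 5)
Your ``$\supseteq$'' inclusion and the reduction of ``$\subseteq$'' to a support-function comparison are fine as far as they go, but the core of the argument has two genuine gaps, and the second is structural. First, the attainment problem, which you yourself flag as ``the main obstacle,'' is not resolved by the truncation/Bishop--Phelps sketch. A maximizer of $v$ over the weak$^*$-compact truncation $Ax\cap r\ballc$ is a face point of the \emph{truncation}, not of $Ax$ (it may lie on the sphere $\Vert x^*\Vert=r$ rather than on a genuine face of $Ax$), so Corollary \ref{C.4} gives no information about it; moreover, density of supremum-attaining \emph{directions} $v$ in $N^\circ$ is not a standard fact for an unbounded weak$^*$ closed convex subset of a dual space (Bishop--Phelps concerns support functionals of closed convex subsets of $X$, not weak$^*$ support directions for subsets of $X^*$). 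The paper circumvents exactly this by working with \emph{points} instead of directions: by \cite[Theorem 1]{Phel}, the weak$^*$ support points of the weak$^*$ closed convex set $Ax$ are norm-dense in $\ope{bd}Ax$; each such point lies in some face $A(x;v_n)$, hence in $K:=\ope{bw}^*{-}\underset{y \to x, y\ne x}{\Lim\sup}\,Ay$ by Corollary \ref{C.4}, which yields $\ope{bd}Ax\subset\overline K$. (Incidentally, your separation step also tacitly assumes $S\ne\varnothing$, since otherwise $\sigma_S(v)+\sigma_N(v)$ is $-\infty+\infty$; nonemptiness itself requires an argument.)

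Second, even granting your key claim $\sigma_S+\sigma_N\ge\sigma_{Ax}$ on the relevant cone, a separation argument can only prove that $Ax$ is contained in the \emph{weak$^*$ closed convex hull} of $S+N$, which is strictly weaker than \eqref{**}: the right-hand side there is $\ope{co}\overline S+N$ with no outer weak$^*$ closure. Your remark that you ``expect this step to force $\ope{co}\overline S+N$ to be weak$^*$-closed'' is unsupported; support functions evaluated on $X$ cannot distinguish a convex subset of $X^*$ from its weak$^*$ closed hull, so no argument phrased purely in terms of $\sigma_S,\sigma_N,\sigma_{Ax}$ can produce the closure-free formula. The paper gets it by an explicit pointwise decomposition: it first shows $Ax\subset\ope{co}(\ope{bd}Ax)+N_{\overline{{D}(A)}}(x)$ by a ray argument --- for $x^*\in\ope{int}Ax$ and $x^*_0\in\ope{bd}Ax$, the ray $x^*+t(x^*-x^*_0)$ either stays in $Ax$ for all $t\ge 0$, in which case monotonicity gives $x^*-x^*_0\in N_{\overline{{D}(A)}}(x)$, or exits $Ax$ at a second boundary point $z^*$, in which case $x^*=\frac{1}{1+\bar\rho}z^*+\frac{\bar\rho}{1+\bar\rho}x^*_0$ is an explicit convex combination --- and then combines this with $\ope{bd}Ax\subset\overline K$ from the first paragraph. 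Your treatment of \eqref{***} inherits both problems; in the paper it is the same two-step argument with $\overline K$ replaced by the norm-closed set $K':=\ope{s}{-}\underset{y \to x, y\ne x}{\Lim\sup}\,Ay$, using \eqref{C.4.1.2} in place of \eqref{C.4.1.1}.
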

	\begin{proof} Let $x \in X$ and 
		$$K:= \ope{bw}^*-\underset{y \to x, y\ne x}{\Lim\sup}\,Ay.$$ 
		Owing to  the maximal monotonicity of $A$,  $K \subset Ax$ and the fact that $Ax$ is closed convex and satisfies  $Ax= Ax+ N_{\overline{{D}(A)}}(x)$, we deduce the inclusion
		$$  \ope{co}\overline{\{\ope{bw}^*-\underset{y \to x, y\ne x}{\Lim\sup}\,Ay\}} + N_{\overline{{D}(A)}}(x) \subset Ax.$$
		To justify the reverse inclusion, we will show the following two inclusions 
		\begin{equation}\label{t.5.1.1}
			Ax \subset \ope{co}(\ope{bd}Ax) + N_{\overline{{D}(A)}}(x)
		\end{equation}
		and
		\begin{equation}\label{t.5.1.2}
			\ope{bd}Ax \subset \overline{K}.
		\end{equation}
		For the case $x \notin {D}(A)$, one has  $K= Ax = \ope{bd}Ax= \varnothing$ and so the above inclusions  are trivial. Let $x \in {D}(A).$ To clarify \eqref{t.5.1.1}, we pick any $x^* \in Ax$. If $x^* \in \ope{bd}Ax$, then  
		\begin{equation}\label{t.5.1.3}
			x^* \in \ope{bd}Ax \subset \ope{co}(\ope{bd}Ax) + N_{\overline{{D}(A)}}(x).
		\end{equation}
		If $x^* \in \ope{int}Ax$, we take  $x^*_0 \in \ope{bd}Ax$ and define
		$$ \bar{\rho}:= \sup\big\{ \rho > 0: x^* +t(x^*-x^*_0) \subset Ax, \forall t \in [0, \rho)\big\}>0.$$
		We consider two cases of $\bar{\rho}$.
		
		\textit{Case 1:} $\bar{\rho} = +\infty$. It means that $x^*+ t(x^*-x^*_0) \in Ax$ for every $t \ge 0$. Hence, for $(y, y^*) \in {Gr}(A)$ by the monotonicity of $A$, we have
		$$ \langle x^*+ t(x^*-x^*_0)-y^*, y-x \rangle \le 0, \forall t \ge 0.$$
		This implies $\langle x^*-x^*_0, y-x \rangle \le 0$. Since this holds for any $y \in {D}(A),$ we get $x^*-x^*_0 \in N_{\overline{{D}(A)}}(x)$, which implies that
		$$ x^* \in x^*_0 + N_{\overline{{D}(A)}}(x) \subset \ope{co}(\ope{bd}Ax) + N_{\overline{{D}(A)}}(x).$$
		
		\textit{Case 2:} $\bar{\rho} < +\infty.$ Since $Ax$ is closed,  $z^*:= x^* + \bar{\rho}(x^*-x^*_0) \in Ax.$ Furthermore, $z^* \in \ope{bd}Ax$ by the definition of $\bar{\rho}$. Therefore
		$$ x^* = \frac{1}{1+\bar{\rho}}z^*+ \frac{\bar{\rho}}{1+ \bar{\rho}}x^*_0 \in \ope{co}(\ope{bd}Ax) \subset \ope{co}(\ope{bd}Ax) + N_{\overline{{D}(A)}}(x).$$   
		Combining the results above, we conclude that \eqref{t.5.1.1} holds.
		
		Finally, let $x^* \in \ope{bd}Ax$. According to \cite[Theorem 1]{Phel}, since $Ax$ is weak$^*$ closed convex, there exists a sequence $(x^*_n)_{n \in \mathbb{N}} \subset \ope{bd}Ax$ such that $x^*_n \to x^*$, where $x^*_n$ are weak$^*$ support points of $Ax$, i.e., for each $x^*_n$, there exists $v_n \in X \setminus \{0\}$ such that  
		$$\langle x^*_n,v_n\rangle = \sigma_{Ax}(v_n).$$	
		The equality means that $x^*_n \in A(x; v_n)$ for  $n \in \mathbb{N}$. By Corollary \ref{C.4},  $x^*_n \in K$, which implies $x^* \in \overline{K}$. Hence we get \eqref{t.5.1.2} and the proof of \eqref{**} is complete. 
		
		Now suppose in addition that $X^*$ has the $\ope{w}^*$-Kadec-Klee property. The proof of \eqref{***} is similar  to that of \eqref{**}, where $K$ is replaced by the closed set $ K':= \ope{s}-\underset{y \to x, y\ne x}{\Lim\sup}\,Ay.$
	\end{proof}
	\begin{remark} \rm With the setting in the previous theorem, for $x\in X$ 
		\begin{equation}\label{Im.remark}
			Ax = \ope{co}\,\overline{\{\ope{bw}^*-\underset{y \to x, y\ne x}{\Lim\sup}\,Ay\}} + N_{\overline{{D}(A)}}(x) \subset \ope{co}\overline{\{\ope{w}^*-\underset{y \to x, y\ne x}{\Lim\sup}\,Ay\}} + N_{\overline{{D}(A)}}(x).
		\end{equation}
		%see \cite{Ho} for the definition of $\ope{bw}^*$ convergence. 
		It is worth noting that in the case that $X$ is a Hilbert space, the above inclusion is still strict; see \cite[Example 1]{BFG}. Furthermore, the inclusion  becomes equality if the nets are replaced by sequences, and by \cite[Theorem 3.3]{HB}
		$$ Ax=  \ope{co}{\{\ope{ss}-\underset{y \to x, y \ne x}{\Lim\sup}\,Ay\}} + N_{\overline{{D}(A)}}(x),$$
		where 
		\begin{align*}
			{\ope{ss}-}\underset{y \to x, y\ne x}{\Lim\sup} \,Ay=\Big\{  x^*  \in X^*:  &\exists\,  (x_n, x^*_n)_{n \in \mathbb{N}} \in {Gr}(A)  \text{ with }  x_n \ne x ,\forall n \in \mathbb{N},\\
			& x_n \to x, \,  x^*_n \to x^*\Big\}.
		\end{align*}

		{\hfill $\Diamond$}
	\end{remark}

	The next theorem is a representation for maximal monotone operators in reflexive spaces. 
	\begin{theorem}\label{Refle.} Let $X$ be a reflexive Banach space and let $A: X \rightrightarrows X^*$ be a maximal monotone operator. Then for $x\in X$
		$$ Ax= \ope{co}\{\ope{s}-\underset{y \to x, y\ne x}{\Lim\sup}\,Ay\} + N_{\overline{{D}(A)}}(x).$$
	\end{theorem}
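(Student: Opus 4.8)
The plan is to deduce this from Theorem~\ref{T.s5.1}, specifically from formula~\eqref{***}, by passing to an equivalent norm on $X$. The first step is to invoke the classical renorming theorem for reflexive spaces (see, e.g., \cite{Za}): since $X$ is reflexive, there is an equivalent norm $\Vert\cdot\Vert_0$ on $X$ such that both $(X,\Vert\cdot\Vert_0)$ and its dual are locally uniformly convex. In particular the renormed dual is strictly convex, and by Lemma~\ref{l.m.t}(b) it has the $\ope{w}^*$-Kadec-Klee property. Thus the two geometric hypotheses required for~\eqref{***} are met in the renormed space.

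The second step is to check the remaining hypothesis of Theorem~\ref{T.s5.1}, namely that $A$ is of type~(D). Here I would use reflexivity directly: identifying $X^{**}$ with $X$, the topology $\tau(X^{**},X^*)$ becomes the weak topology of $X$, and the Gossez monotone closure $\overline{A}^g$ reduces to ${Gr}(A)$ by the maximal monotonicity of $A$ (every pair $(x,x^*)$ monotonically related to the whole graph already lies in the graph). Hence for every $(x,x^*)\in\overline{A}^g={Gr}(A)$ the constant net $(x,x^*)_{i\in I}$ is bounded, converges to $x$ in $\tau(X^{**},X^*)$, and converges strongly to $x^*$, so $A$ is automatically of type~(D). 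Note that renorming affects neither monotonicity nor maximality, since both depend only on the duality pairing. Consequently formula~\eqref{***} applies to $A$ on $(X,\Vert\cdot\Vert_0)$, giving
$$ Ax = \ope{co}\{\ope{s}-\underset{y \to x, y\ne x}{\Lim\sup}\,Ay\} + N_{\overline{{D}(A)}}(x), $$
where every object on the right is computed with respect to $\Vert\cdot\Vert_0$.

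The final step, and the point to be most careful about, is to transfer this identity back to the original norm. The right-hand side is norm-independent: the set $\ope{s}-\Lim\sup_{y\to x,\,y\ne x}Ay$ is defined purely through the norm-topology convergences $x_i\to x$ in $X$ and $x^*_i\to x^*$ in $X^*$ together with membership in ${Gr}(A)$, and equivalent norms induce the same topologies on $X$ and on $X^*$; the convex hull $\ope{co}$ is an algebraic operation; and $N_{\overline{{D}(A)}}(x)$ depends only on the duality pairing and on $\overline{{D}(A)}$, whose closure is unchanged under an equivalent norm. Since the left-hand side $Ax$ is intrinsic to $A$, the identity established for $\Vert\cdot\Vert_0$ holds verbatim for the original norm, and the case $x\notin{D}(A)$ is already subsumed in~\eqref{***}. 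I expect the only genuinely delicate points to be the verification that every maximal monotone operator on a reflexive space is of type~(D) and the bookkeeping confirming norm-independence of the strong-limsup set; the remainder is a direct appeal to~\eqref{***}.
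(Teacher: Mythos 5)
Your proposal is correct and follows essentially the same route as the paper's proof: renorm $X$ via Troyanski's theorem so that both $X$ and $X^*$ are locally uniformly convex, invoke Lemma \ref{l.m.t} for the $\ope{w}^*$-Kadec-Klee property, note that every maximal monotone operator on a reflexive space is of type (D), apply formula \eqref{***} of Theorem \ref{T.s5.1}, and record that both sides of the identity are unaffected by equivalent renorming. The only (harmless) deviations are that you verify type (D) directly from the definition via constant nets --- using that $\overline{A}^g$ reduces to ${Gr}(A)$ under reflexivity and maximality --- where the paper instead cites Rockafellar, and that your identification of $\tau(X^{**},X^*)$ with the weak topology is unnecessary (and not exact if $\tau$ denotes the Mackey topology), since a constant net converges in any topology compatible with the pairing.
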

	\begin{proof}Let $x \in X$. We first observe that the maximal monotonicity of $A$ and the terms 
		$\ope{co}\{\ope{s}-\underset{y \to x, y\ne x}{\Lim\sup}\,Ay\}, N_{\overline{{D}(A)}}(x)$
		are not affected by equivalently renorming of $X$ and $X^*.$ By Troyanski's renorming theorem (\cite[Theorem 5.192]{GP}), $X$ admits an equivalent norm so that both $X$ and $X^*$ are locally uniformly convex. Therefore, we may assume that both $X$ and $X^*$ are locally uniformly convex. Then $X^*$ has the $\ope{w}^*$-Kadec-Klee property due to Lemma \ref{l.m.t}. Since the space $X$ is reflexive, the operator $A$ is of type (D), owing to  \cite[Proposition 1]{Rock.2}. Using the second part of Theorem \ref{T.s5.1}, we  derive the conclusion of the theorem. 
	\end{proof}

	The next result is a representation formula for the subdifferentials of a lower semicontinuous convex function. 
	\begin{theorem}\label{Conv.function}
		Let $X$ be a Banach space whose dual space is  strictly convex and let $f: X \to \overline{\mathbb{R}}$ be a lower semicontinuous function. Then for $x\in X$
		\begin{equation}\label{c.5.2.1}
			\partial f(x)= \ope{co}\overline{\{\ope{bw}^*-\,\,\underset{y \to x, y\ne x}{\Lim\sup}\,\partial f(y)\}} +N_{\overline{\ope{dom}f}}(x).
		\end{equation}
		If in addition that $X^*$ has the ${w}^*$-Kadec-Klee property, then for $x\in X$
		\begin{equation}\label{c.5.2.2} 
			\partial f(x)= \ope{co}\{\ope{s}-\,\,\underset{y \to x, y\ne x}{\Lim\sup}\,\partial f(y)\} +N_{\overline{\ope{dom}f}}(x).
		\end{equation}
	\end{theorem}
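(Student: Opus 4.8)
The plan is to read Theorem \ref{Conv.function} off as a direct specialization of Theorem \ref{T.s5.1} to the subdifferential operator $A := \partial f$; essentially all of the work is to verify that $\partial f$ satisfies the hypotheses of that theorem and to identify its domain. If $f\equiv +\infty$ then $\partial f(x)=\varnothing$ for every $x$ and both sides of \eqref{c.5.2.1} and \eqref{c.5.2.2} are empty, so I may assume $f$ is a proper lower semicontinuous convex function (the subdifferential is meaningful only in the convex case, as set up in the Preliminaries). First I would invoke Rockafellar's theorem that $\partial f$ is maximal monotone on $X$; this is the classical starting point and requires no hypothesis on $X$.

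The crucial structural input is that $\partial f$ is of type (D). This is the content of Gossez's result \cite{Go1} that the subdifferential of a proper lower semicontinuous convex function is of dense type; since types (D), (FP) and (NI) coincide by \cite{MaSv1,BBWY} as recalled in the Introduction, $\partial f$ is of type (D) in the sense used throughout. I expect this to be the main obstacle, in that it is the only place where the nonreflexive, possibly empty-interior difficulty is genuinely absorbed: everything downstream is then a mechanical application of the already-established Theorem \ref{T.s5.1}.

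Next I would identify the normal-cone term. By the Br\o ndsted--Rockafellar theorem (see, e.g., \cite{Phe}), the domain of the subdifferential is dense in the domain of $f$, i.e. $\overline{{D}(\partial f)}=\overline{\ope{dom}f}$; consequently $N_{\overline{{D}(\partial f)}}(x)=N_{\overline{\ope{dom}f}}(x)$ for all $x\in X$. Since $X^*$ is strictly convex by hypothesis, the operator $A=\partial f$ meets every assumption of Theorem \ref{T.s5.1}.

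Finally, applying formula \eqref{**} of Theorem \ref{T.s5.1} with $A=\partial f$, $Ax=\partial f(x)$, and $Ay=\partial f(y)$, and substituting $N_{\overline{{D}(\partial f)}}(x)=N_{\overline{\ope{dom}f}}(x)$, yields \eqref{c.5.2.1} at once. If in addition $X^*$ has the $\ope{w}^*$-Kadec-Klee property, the same substitution in formula \eqref{***} gives \eqref{c.5.2.2}. No further estimates are required; the entire argument is a translation of the monotone-operator statement into the language of subdifferentials.
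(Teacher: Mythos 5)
Your proof is correct and takes essentially the same route as the paper: both reduce the statement to Theorem~\ref{T.s5.1} by verifying that $\partial f$ is maximal monotone of type (D) --- the paper does this via the range condition ${R}(\lambda J_{\varepsilon}(\cdot-x)+\partial f)=X^*$ (citing \cite{Go} and \cite{Ra}, which gives type (D) through Theorem~\ref{D.eq}), while you invoke Gossez's original theorem \cite{Go1} that subdifferentials of proper lower semicontinuous convex functions are of dense type. Your extra steps --- reducing to proper convex $f$ and identifying $N_{\overline{{D}(\partial f)}}(x)=N_{\overline{\ope{dom}f}}(x)$ via Br\o ndsted--Rockafellar --- are details the paper leaves implicit but that are genuinely needed for the formula as stated.
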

	
	\begin{proof} Using \cite[Lemma 1]{Go} and \cite[Corollary 2]{Ra} one has
		$$ {R}(\lambda J_{\varepsilon}(\cdot-x) + \partial f) = X^*, \forall x \in X,\forall \lambda, \varepsilon >0.$$
		Now the theorem follows from Theorem \ref{T.s5.1}. 
	\end{proof}
	\begin{remark}\rm
		Let $X$ be a Banach space. If the closed unit ball of $X^*$ is $\rm{weak}^*$ sequentially compact, then the nets in Lemma \ref{L.KEY} can be replaced by sequences. 
		As a consequence, all results in Proposition \ref{T.s4.1}, theorems \ref{T.s5.1}, \ref{Refle.}, \ref{Conv.function} and 
		Corollary \ref{C.4} are also true for sequences. In particular, these results are true for the case that $X$ is a weak Asplund space; see \cite[p.239]{Di}. 
		
		\hfill{$\Diamond$}
	\end{remark}
	\section{A representation formula for the support functions of the values of maximal monotone operators}

	In this section we consider a maximal monotone operator $A$ of type (D) in a Banach space which has the w-Kadec-Klee property and whose dual space has the w$^*$-Kadec-Klee property. We provide an explicit formula for the support function of $Ax$ by use of the minimal-norm selection of $Ay$ for nearby points $y$ of $x$.

	We first present a technical result for the theorem below.
	\begin{lemma}\label{L.s4.3}
		Let $X$ be a Banach space such that $X^*, X^{**}$ are strictly convex and $X$ has the w-Kadec-Klee property, $X^*$ has the $w^*$-Kadec-Klee property. Let $v \in X \setminus \{0\}$,  $(t_i)_{i \in I} \subset \mathbb{R}_{+}$ with $t_i \to 0$ and $(w_i)_{i \in I} \subset X$.  If there exists a net $(z^*_i)_{i \in I} \subset X^*$ with $z^*_i \in J_{t_i}(w_i)$, 
		$z^*_i \to J(v)$, then there exists a subnet of $(w_i)_{i \in I}$ converging to $v$. 
	\end{lemma}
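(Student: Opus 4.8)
The plan is to pass to the bidual $X^{**}$: since $X$ is not assumed reflexive, a bounded net in $X$ need not have a weakly convergent subnet, but its canonical image in $X^{**}$ does have a weak$^*$ convergent subnet by Banach--Alaoglu. First I would record the consequences of $z^*_i \to J(v)$. As $z^*_i \in J_{t_i}(w_i)$ with $t_i \to 0$, Lemma \ref{L.1}(ii) gives $\big|\,\|w_i\|-\|z^*_i\|\,\big| \le \sqrt{2t_i} \to 0$, and strong convergence gives $\|z^*_i\| \to \|J(v)\| = \|v\|$; hence $\|w_i\| \to \|v\|$ and $(w_i)_{i\in I}$ is bounded. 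Identifying each $w_i$ with its canonical image in $X^{**}$, I extract a subnet (not relabelled) with $w_i \overset{\ope{w}^*}{\rightharpoonup} \xi$ in $\sigma(X^{**},X^*)$ for some $\xi \in X^{**}$.

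The key step is to identify $\xi$, and I would do so by computing $\lim_i \langle z^*_i, w_i\rangle$ in two ways. By Lemma \ref{L.1}(iii), $\big|\langle z^*_i, w_i\rangle - \|w_i\|^2\big| \le \sqrt{t_i}\big(1+\tfrac{1}{2}\|w_i\|^2\big) \to 0$, so $\langle z^*_i, w_i\rangle \to \|v\|^2$. On the other hand, viewing $w_i$ in $X^{**}$ and splitting $\langle z^*_i, w_i\rangle = \langle w_i, z^*_i - J(v)\rangle + \langle w_i, J(v)\rangle$ in the $X^{**}\times X^*$ pairing, the first term is at most $\|w_i\|\,\|z^*_i - J(v)\| \to 0$ while the second tends to $\langle \xi, J(v)\rangle$ by weak$^*$ convergence; hence $\langle \xi, J(v)\rangle = \|v\|^2$. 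Since the norm of $X^{**}$ is weak$^*$ lower semicontinuous, $\|\xi\| \le \liminf_i \|w_i\| = \|v\|$, and then $\|v\|^2 = \langle \xi, J(v)\rangle \le \|\xi\|\,\|J(v)\| \le \|v\|^2$ forces $\|\xi\| = \|v\|$. Together with $\|J(v)\| = \|v\|$, these equalities say precisely that $\xi \in J^*(J(v))$ in the sense of \eqref{D.M.D}.

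To finish, I observe that the canonical image $\hat v$ of $v$ also belongs to $J^*(J(v))$, because $\langle \hat v, J(v)\rangle = \langle J(v), v\rangle = \|v\|^2 = \|J(v)\|^2 = \|\hat v\|^2$. As $X^{**}$ is strictly convex, $J^*$ is single-valued (just as strict convexity of $X^*$ makes $J$ single-valued), so $\xi = \hat v$. Consequently $w_i \overset{\ope{w}^*}{\rightharpoonup} \hat v$ in $\sigma(X^{**},X^*)$, which, tested against elements of $X^*$, is exactly weak convergence $w_i \overset{\ope{w}}{\to} v$ in $X$. Combining this with $\|w_i\| \to \|v\|$, the w-Kadec-Klee property of $X$ upgrades it to $w_i \to v$, which is the desired subnet.

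I expect the identification of the weak$^*$ limit $\xi$ to be the main obstacle: non-reflexivity forces the detour through $X^{**}$, and pinning down $\xi = \hat v$ rests on the interplay of the two duality mappings $J$ and $J^*$ together with strict convexity of $X^{**}$ (which makes $J^*$ single-valued). The remaining hypotheses play supporting roles: strict convexity of $X^*$ ensures that $J(v)$ is a single point, so the statement is unambiguous, while the w-Kadec-Klee property of $X$ is what converts the final weak convergence into norm convergence.
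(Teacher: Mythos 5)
Your proof is correct, but it reaches the crucial identification of the limit by a genuinely different route than the paper. The paper begins by invoking the Br\o ndsted--Rockafellar theorem to replace each approximate pair $(w_i,z^*_i)$, $z^*_i\in J_{t_i}(w_i)$, by an exact duality pair $(v_i,J(v_i))$ with $\Vert w_i-v_i\Vert\le\sqrt{t_i}$ and $\Vert z^*_i-J(v_i)\Vert\le\sqrt{t_i}$; it then extracts a $\sigma(X^{**},X^*)$-convergent subnet of $(v_i)$, identifies its limit as $J^*(J(v))=v$ by combining the maximal monotonicity of $J^*$ with the strict convexity of $X^{**}$, applies the w-Kadec-Klee property of $X$ to get $v_i\to v$, and finally transfers this back to $(w_i)$ via $\Vert w_i-v_i\Vert\to 0$. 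You dispense with the Br\o ndsted--Rockafellar approximation entirely: working directly with $(w_i,z^*_i)$, you use the paper's own estimates in Lemma~\ref{L.1}~(ii),(iii) to obtain $\Vert w_i\Vert\to\Vert v\Vert$ and $\langle z^*_i,w_i\rangle\to\Vert v\Vert^2$, and you pin down the weak$^*$ cluster point $\xi\in X^{**}$ by the squeeze $\Vert v\Vert^2=\langle\xi,J(v)\rangle\le\Vert\xi\Vert\,\Vert v\Vert\le\Vert v\Vert^2$, which shows that $\xi$ satisfies the defining equalities \eqref{D.M.D} of $J^*(J(v))$; single-valuedness of $J^*$ (again from strict convexity of $X^{**}$) then gives $\xi=\hat v$, and the w-Kadec-Klee property finishes exactly as in the paper. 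The trade-off: the paper outsources the hard step to two external results (Br\o ndsted--Rockafellar and the maximal monotonicity of $J^*$), whereas your argument is more self-contained and elementary, needing only Lemma~\ref{L.1}, Banach--Alaoglu, and the definition of $J^*$, at the price of carrying out the limit computation by hand; notably, both proofs leave the w$^*$-Kadec-Klee hypothesis on $X^*$ unused. Two cosmetic points only: for nets one should say ``eventually bounded'' rather than ``bounded'' (and apply Banach--Alaoglu to a tail of the net), and your appeal to single-valuedness of $J^*$ deserves the one-line verification that two elements of $J^*(x^*)$ share the norm $\Vert x^*\Vert$ and have a midpoint of that same norm, contradicting strict convexity.
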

	\begin{proof}
		Assume that $(z^*_i)_{i \in I}\subset X^*$ with $z^*_i \in J_{t_i}(w_i)$ converging to $J(v)$. 
		According to \cite[p.608]{BroRoc}, for every $i \in I$, there exists $v_i \in X$ such that
		\begin{equation}\label{l.4.3.1}
			\Vert w_i-v_i\Vert \le \sqrt{t_i}, \Vert z^*_i- J(v_i) \Vert \le \sqrt{t_i}.
		\end{equation}
		This implies $J(v_i) \to J(v),$ and hence $\Vert v_i\Vert \to \Vert v\Vert$, which shows that $(v_i)_{i \in I} \subset X \subset X^{**}$ is eventually bounded. Using Banach-Alaoglu theorem, by passing to a subnet, if necessary, that  $v_i \overset{\ope{w}}{\to} z^{**} \in X^{**}$. Furthermore, since $X^{**}$ is strictly convex,  we have $v_i= J^*(J(v_i))$ and $v= J^*(J(v))$.  By the maximal monotonicity of $J^*$, we deduce $z^{**} = J^*(J(v))$ and hence 
		$$ \Vert v_i\Vert \to \Vert v\Vert,\,\,\text{ and }\,\,  v_i \overset{\ope{w}}{\rightarrow} v. $$ 
		Owing to  the assumption that $X$ has the $\ope{w}$-Kadec-Klee property, we get $v_i \to v$. It follows from \eqref{l.4.3.1} that $w_i \to v$, which completes the proof.
	\end{proof}
	
	The following theorem represents the support function of the value $Ax$ via its minimal-norm selections.

	\begin{theorem}\label{T.s4.2}
		Let $X$ be a Banach space such that $X^*, X^{**}$ are strictly convex and $X$ has the w-Kadec-Klee property, $X^*$ has the $w^*$-Kadec-Klee property. Let $A: X \rightrightarrows X^*$ be a maximal monotone operator of type (D). Then for $x \in {D}(A), \, v \in X \setminus \{0\}$
		\begin{equation}\label{*}
			\sigma_{Ax}(v)= 
			\underset{t \downarrow 0 \atop w \to v}{\lim\inf}\langle A^{\circ}(x+tw),w\rangle. 
		\end{equation}	
	\end{theorem}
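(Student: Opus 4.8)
The plan is to prove the two inequalities $\sigma_{Ax}(v)\ge R$ and $\sigma_{Ax}(v)\le R$, where $R$ denotes the right-hand side of \eqref{*}, with the convention $\langle A^{\circ}(x+tw),w\rangle=+\infty$ whenever $x+tw\notin {D}(A)$. The inequality $R\ge\sigma_{Ax}(v)$ is pure monotonicity. Fix $x^{*}\in Ax$. For any $t>0$ and any $w$ with $x+tw\in {D}(A)$, applying monotonicity of $A$ to the pairs $(x,x^{*})$ and $(x+tw,A^{\circ}(x+tw))$ gives $\langle A^{\circ}(x+tw)-x^{*},tw\rangle\ge0$, hence $\langle A^{\circ}(x+tw),w\rangle\ge\langle x^{*},w\rangle$. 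Letting $w\to v$ and taking the $\liminf$ yields $R\ge\langle x^{*},v\rangle$, and the supremum over $x^{*}\in Ax$ gives $R\ge\sigma_{Ax}(v)$. In particular \eqref{*} already holds when $\sigma_{Ax}(v)=+\infty$, so from now on I assume $\sigma_{Ax}(v)<+\infty$.

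For the reverse inequality $R\le\sigma_{Ax}(v)$ I would first treat the case $A(x;v)\ne\varnothing$. Choosing $x^{*}\in A(x;v)$ and invoking inclusion \eqref{T.4.1.2} of Proposition \ref{T.s4.1} produces nets $\lambda_{i}\to0$ with $\lambda_i>0$, $(w_{i})$, and $x_{i}^{*}\in A(x+\lambda_{i}w_{i})$ with $x_{i}^{*}\to x^{*}$, together with a selection $j_{\lambda_{i}}(w_{i})\in J_{\lambda_{i}}(w_{i})$ satisfying $j_{\lambda_{i}}(w_{i})\to J(v)$; Lemma \ref{L.s4.3} then lets me pass to a subnet with $w_{i}\to v$. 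Since $x_{i}^{*}\in A(x+\lambda_{i}w_{i})$, the minimal-norm selection obeys $\Vert A^{\circ}(x+\lambda_{i}w_{i})\Vert\le\Vert x_{i}^{*}\Vert\to\Vert x^{*}\Vert$, so this net is eventually bounded. By Banach--Alaoglu I may assume $A^{\circ}(x+\lambda_{i}w_{i})\oow\bar x^{*}$, and since $x+\lambda_{i}w_{i}\to x$ the graph-closedness fact recalled from \cite{BFG,BY1} gives $\bar x^{*}\in Ax$. Pairing the bounded weak$^{*}$-convergent net against the norm-convergent net $w_{i}\to v$ is continuous, so $\langle A^{\circ}(x+\lambda_{i}w_{i}),w_{i}\rangle\to\langle\bar x^{*},v\rangle\le\sigma_{Ax}(v)$. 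As $\lambda_{i}\to0^{+}$ and $w_{i}\to v$, this net is eventually inside every neighbourhood of $(0^{+},v)$, whence $R\le\langle\bar x^{*},v\rangle\le\sigma_{Ax}(v)$.

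To drop the assumption $A(x;v)\ne\varnothing$ — which can genuinely fail even in a Hilbert space, for instance when $A=\partial\sigma_{C}$ and the supremum of $\langle\cdot,v\rangle$ over a closed convex $C$ is finite but unattained — I would exploit that the $\liminf$ in \eqref{*} already samples all $w\to v$, combined with a Br\o ndsted--Rockafellar approximation. Given $\epsilon>0$, pick $x^{*}\in Ax$ with $\langle x^{*},v\rangle>\sigma_{Ax}(v)-\epsilon$; since $\sigma_{Ax}$ is a proper lower semicontinuous convex function whose subdifferential at a point is the corresponding face of $Ax$, this means $x^{*}\in\partial_{\epsilon}\sigma_{Ax}(v)$. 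The Br\o ndsted--Rockafellar theorem \cite{BroRoc} then provides $v_{\epsilon}$ and $x_{\epsilon}^{*}\in\partial\sigma_{Ax}(v_{\epsilon})=A(x;v_{\epsilon})$ with $\Vert v_{\epsilon}-v\Vert\le\sqrt{\epsilon}$ and $\Vert x_{\epsilon}^{*}-x^{*}\Vert\le\sqrt{\epsilon}$; in particular $A(x;v_{\epsilon})\ne\varnothing$, $v_{\epsilon}\to v$, and $\sigma_{Ax}(v_{\epsilon})=\langle x_{\epsilon}^{*},v_{\epsilon}\rangle\le\sigma_{Ax}(v)+C\sqrt{\epsilon}$ for a constant $C$. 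Applying the previous case in direction $v_{\epsilon}$ gives points $x+tw$ with $t$ and $\Vert w-v_{\epsilon}\Vert$ as small as desired and $\langle A^{\circ}(x+tw),w\rangle$ as close as desired to a value $\le\sigma_{Ax}(v_{\epsilon})$; hence for any $\delta,\eta>0$ I can select such a point with $t<\delta$, $\Vert w-v\Vert<\eta$ and value $\le\sigma_{Ax}(v)+C'\sqrt{\epsilon}$, which forces $R\le\sigma_{Ax}(v)$.

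The step I expect to be the main obstacle is precisely this upper bound $R\le\sigma_{Ax}(v)$: unlike the monotonicity lower bound, it cannot be read off from a single inequality and instead forces one to manufacture nearby points $x+tw$ along which the minimal-norm selections stay bounded while $w\to v$. The boundedness is exactly what the approximating elements $x_{i}^{*}$ of Proposition \ref{T.s4.1} supply, through $\Vert A^{\circ}\Vert\le\Vert x_{i}^{*}\Vert$; and the delicate empty-face situation, where the points attaining nearby directions may escape to infinity as the direction returns to $v$, is exactly what the Br\o ndsted--Rockafellar step neutralises by letting the $\liminf$ approach along the well-behaved directions $v_{\epsilon}$.
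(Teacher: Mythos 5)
Your proposal follows the paper's own proof almost step for step: the monotonicity argument for the inequality $\sigma_{Ax}(v)\le \liminf$, and, when the face is nonempty (equivalently $v\in D(\partial\sigma_{Ax})$, since $\partial\sigma_{Ax}(v)=A(x;v)$ for the weak$^*$ closed convex set $Ax$), the combination of inclusion \eqref{T.4.1.2} of Proposition \ref{T.s4.1}, Lemma \ref{L.s4.3}, the bound $\Vert A^{\circ}(x+t_iw_i)\Vert\le\Vert x^*_i\Vert$, Banach--Alaoglu, and norm$\times$weak$^*$ closedness of the graph along bounded nets is exactly the paper's Case 1; the reduction of the empty-face case to the attained case via Br\o ndsted--Rockafellar plus a diagonal selection is the paper's Case 2.

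There is, however, a genuine quantitative gap in your empty-face step. You pick $x^*\in Ax$ with $\langle x^*,v\rangle>\sigma_{Ax}(v)-\epsilon$, apply the symmetric Br\o ndsted--Rockafellar estimate to obtain $v_\epsilon$ and $x^*_\epsilon\in A(x;v_\epsilon)$ with $\Vert v_\epsilon-v\Vert\le\sqrt{\epsilon}$, $\Vert x^*_\epsilon-x^*\Vert\le\sqrt{\epsilon}$, and then assert $\sigma_{Ax}(v_\epsilon)\le\sigma_{Ax}(v)+C\sqrt{\epsilon}$ \emph{for a constant} $C$. Unwinding that estimate, one gets at best $\sigma_{Ax}(v_\epsilon)\le\sigma_{Ax}(v)+\Vert x^*_\epsilon\Vert\,\Vert v_\epsilon-v\Vert\le\sigma_{Ax}(v)+(\Vert x^*\Vert+\sqrt{\epsilon})\sqrt{\epsilon}$, so $C$ involves $\Vert x^*\Vert=\Vert x^*(\epsilon)\Vert$. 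But in precisely the situation you are treating, $A(x;v)=\varnothing$, the norms of the $\epsilon$-maximizers must blow up as $\epsilon\to0$: a bounded net of them would have a weak$^*$ cluster point, which lies in the weak$^*$-closed set $Ax$ and attains $\sigma_{Ax}(v)$, contradicting emptiness of the face. Since the blow-up rate of $\Vert x^*(\epsilon)\Vert$ is uncontrolled, $C(\epsilon)\sqrt{\epsilon}$ need not tend to $0$, and the inequality that drives your final diagonal argument is unjustified as written. The repair is standard and is exactly what the paper does: invoke the density form of Br\o ndsted--Rockafellar, \cite[Theorem 2]{BroRoc}, which directly supplies $v_n\to v$ with $\partial\sigma_{Ax}(v_n)\ne\varnothing$ and $\sigma_{Ax}(v_n)\to\sigma_{Ax}(v)$. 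Alternatively, keep your derivation but use the asymmetric form of the approximation theorem, $\Vert v_\epsilon-v\Vert\le\epsilon/\lambda$ and $\Vert x^*_\epsilon-x^*\Vert\le\lambda$, choosing $\lambda\ge\max\{\Vert x^*(\epsilon)\Vert,1\}$; then $\sigma_{Ax}(v_\epsilon)\le\sigma_{Ax}(v)+2\epsilon$ and $\Vert v_\epsilon-v\Vert\le\epsilon$, and the rest of your argument goes through unchanged.
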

	\begin{proof} Let $x \in {D}(A)$ and $v \in X\setminus\{0\}$. We first show that 
		\begin{equation}\label{t.4.2.0}
			\sigma_{Ax}(v)\le \underset{t \downarrow 0\atop w\to v}{\lim\inf}\langle A^{\circ}(x+tw),w\rangle.
		\end{equation}
		Indeed, by the monotonicity of $A$, for $x^* \in Ax$ we have
		$$\langle x^*, w\rangle = \frac{1}{t} \langle x^*, (x+tw)-x \rangle \le \langle A^{\circ}(x+tw),w\rangle$$
		for all $t \ge 0, w \in X \setminus \{0\}$, 
		with the convention $\langle A^{\circ}(x+tw),w\rangle= +\infty$ if $x+tw\notin {D}(A)$. It follows that
		$$ \langle x^*, v \rangle \le \underset{t \downarrow 0\atop w\to v}{\lim\inf}\langle A^{\circ}(x+tw),w\rangle.$$
		Since this holds for any $x^* \in Ax$, we get inequality \eqref{t.4.2.0}. It remains to show
		\begin{equation}\label{t.4.2.1}
			\sigma_{Ax}(v) \ge  \underset{t \downarrow 0 \atop w \to v}{\lim\inf}\,\langle A^{\circ}(x+tw),w\rangle.
		\end{equation}	
		This obviously holds for  $\sigma_{Ax}(v) = +\infty$. If $\sigma_{Ax}(v) < +\infty$, then  $v \in \ope{dom}(\sigma_{Ax})\subset  \overline{{D}(\partial \sigma_{Ax})}$. We consider two cases of $v$ as follows.
		
		\textit{Case 1:} $v \in D(\partial \sigma_{Ax})$. Pick any $x^* \in \partial \sigma_{Ax}(v).$ Involving \eqref{*} that $\partial \sigma_{Ax}(v) = A(x;v)$, we have $x^*\in A(x;v)$. 
		According to Proposition \ref{T.s4.1}, we can find nets $(t_i)_{i \in I} \subset \mathbb{R}_{+}$, $ (w_i)_{i\in I} \subset X$ and $(z^*_i)_{i \in I} \subset X^*, (x^*_i)_{i \in I} \subset X^*$ with $t_i \downarrow 0,$  $z^*_i \in J_{t_i}(w_i)$, and $ x^*_i \in A(x+t_iw_i)$ such that
		\begin{equation}\label{t.4.2.2}
			z^*_i \to J(v), \,\, \,\,x^*_i \to x^*.
		\end{equation} 
		On one hand, since $x^*_i \to x^*$,  $(x^*_i)_{i\in I}$ is eventually bounded. Hence $(A^{\circ}(x+t_iw_i))_i$ is also eventually bounded due to $\Vert A^{\circ}(x+t_iw_i)\Vert \le \Vert x^*_i\Vert$ for all $i \in I$. 
		By Banach-Alaoglu theorem, passing to the limit $i\to\infty$ 
		along a subnet, one has
		\begin{equation}\label{t.4.2.3}
			A^{\circ}(x+t_iw_i) \overset{\ope{w}^*}{\rightharpoonup} z^*
		\end{equation}
		for some $z^*\in X^*$. Note that $(w_i)_{i}$ is eventually bounded since $ \Vert w_i\Vert \le \Vert z^*_i\Vert + \sqrt{2t_i}, \forall i \in I $  and $\Vert z^*_i\Vert \to \Vert J(v)\Vert= \Vert v\Vert$. This gives $x+t_iw_i \to x$. It follows from the maximal monotonicity of $A$, the eventual boundedness of $(A^{\circ}(x+t_iw_i))_i$ and \eqref{t.4.2.3} that $z^* \in Ax.$ This yields 
		\begin{equation}\label{t.4.2.4}
			\sigma_{Ax}(v) \ge \langle z^*, v\rangle.
		\end{equation}
		On the other hand, using Lemma \ref{L.s4.3}, we may suppose, by passing to a subnet
		if necessary, that $w_i \to v.$
		Owing to the fact that $(A^{\circ}(x+t_iw_i))_{i \in I}$ is eventually bounded and \eqref{t.4.2.3}, we get
		\begin{equation*}
			\langle A^{\circ}(x+t_iw_i), w_i\rangle \to \langle z^*, v\rangle.
		\end{equation*}
		Together with \eqref{t.4.2.4}, we derive
		\begin{equation}\label{t.4.2.5}
			\sigma_{Ax}(v) \ge \underset{i}{\lim}\,\langle A^{\circ}(x+t_iw_i), w_i\rangle \ge \underset{t \to 0 \atop j_{t}(w) \to J(v)}{\lim\inf}\langle A^{\circ}(x+tw),w\rangle.
		\end{equation}	
		It follows from Lemma \ref{L.s4.3} that
		\begin{equation}\label{t.4.2.6}
			\underset{t \to 0 \atop j_{t}(w) \to J(v)}{\lim\inf}\langle A^{\circ}(x+tw),w\rangle \ge \underset{t \to 0 \atop w \to v}{\lim\inf}\langle A^{\circ}(x+tw),w\rangle.
		\end{equation}
		Combining \eqref{t.4.2.5} and \eqref{t.4.2.6}, we obtain \eqref{t.4.2.1}.

		\textit{Case 2:} $ v \in \ope{dom}\sigma_{Ax} \setminus \ope{D}(\partial \sigma_{Ax})$. This means that $\partial \sigma_{Ax}(v) = \varnothing.$ According to \cite[Theorem 2]{BroRoc}, there exists $v_n \to v$ such that
		$\sigma_{Ax}(v_n) \to \sigma_{Ax}(v)$ and $\partial \sigma_{Ax}(v_n) \ne \varnothing.$ From Case 1, for every $n \in \mathbb{N}$
		$$ \sigma_{Ax}(v_n)= \underset{t \downarrow 0, \atop w \to v_n}{\lim\inf}\langle A^{\circ}(x+tw),w\rangle.$$
		Hence, for every $n$ there exist $t_n >0, w_n \in X$ such that
		$$ t_n \le \frac{1}{n},\,\, \Vert w_n -v_n\Vert \le \frac{1}{n}, \,\, \vert \langle A^{\circ}(x+t_nw_n), w_n\rangle -\sigma_{Ax}(v_n)\vert \le \frac{1}{n}.$$
		This implies that $t_n \to 0, w_n \to v$ and $\underset{n \to \infty}{\lim}\langle A^{\circ}(x+t_nw_n), w_n\rangle = \sigma_{Ax}(v)$. Hence we get \eqref{t.4.2.1}. The proof is complete.
		% (Note that we define: $\sigma_{Ax}: X \to \overline{\mathbb{R}}$, not from $X^{**}$ to $\overline{\mathbb{R}}$).   
	\end{proof}

	The next result follows directly from Lemma \ref{l.m.t} and the previous theorem.
	
	\begin{corollary}\label{C.s4.1}
		Let $X$ be a Banach space such that $X, X^*$ are locally uniformly convex and $X^{**}$ is strictly convex and  let $A: X \rightrightarrows X^*$ be a maximal monotone operator of type (D).  
		Then for $x \in {D}(A)$ and $v \in X\setminus\{0\}$
		\begin{equation}\label{C.s4.2}
			\sigma_{Ax}(v)= 
			\underset{t \downarrow 0, \atop w \to v}{\lim\inf}\,\langle A^{\circ}(x+tw),w\rangle.
		\end{equation}
	\end{corollary}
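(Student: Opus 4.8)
The plan is to deduce this statement directly from Theorem~\ref{T.s4.2} by verifying that each of its hypotheses is implied by the assumptions of the corollary. Recall that Theorem~\ref{T.s4.2} requires four structural conditions on the underlying space: that $X^*$ and $X^{**}$ be strictly convex, that $X$ have the w-Kadec-Klee property, and that $X^*$ have the $w^*$-Kadec-Klee property. The corollary instead hypothesizes that $X$ and $X^*$ are locally uniformly convex and that $X^{**}$ is strictly convex, so the entire argument reduces to a short chain of implications between these geometric properties, after which the conclusion is immediate.

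First I would observe that the strict convexity of $X^*$ and of $X^{**}$ holds. The strict convexity of $X^{**}$ is assumed outright, while the strict convexity of $X^*$ follows from the remark (recorded just after the definition of local uniform convexity) that every locally uniformly convex space is strictly convex, applied here to $X^*$. Next I would invoke Lemma~\ref{l.m.t}: part~(a) gives that $X$ has the w-Kadec-Klee property, since $X$ is locally uniformly convex, and part~(b) gives that $X^*$ has the $w^*$-Kadec-Klee property, since $X^*$ is locally uniformly convex. At this point all four structural hypotheses of Theorem~\ref{T.s4.2} are in force.

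Finally, since $A$ is assumed to be a maximal monotone operator of type (D), the operator-theoretic hypothesis of Theorem~\ref{T.s4.2} is also met, and I would simply apply that theorem to obtain, for each $x \in {D}(A)$ and each $v \in X \setminus \{0\}$, exactly the identity \eqref{C.s4.2}. I do not anticipate any genuine obstacle; the statement is a direct specialization, and the only point demanding care is the bookkeeping of which local-uniform-convexity assumption feeds which Kadec-Klee conclusion (local uniform convexity of $X$ yielding the property on $X$, and of $X^*$ yielding the property on $X^*$), so that the reduction to Theorem~\ref{T.s4.2} is airtight.
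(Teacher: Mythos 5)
Your proposal is correct and follows exactly the paper's own route: invoke Lemma~\ref{l.m.t} to convert local uniform convexity of $X$ and $X^*$ into the w-Kadec-Klee and $\ope{w}^*$-Kadec-Klee properties, note that local uniform convexity of $X^*$ gives its strict convexity, and then apply Theorem~\ref{T.s4.2}. The only difference is that you spell out the strict-convexity bookkeeping that the paper leaves implicit, which is a harmless (indeed helpful) addition.
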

	
	\begin{corollary} Let $X$ be a Banach space such that $X^*, X^{**}$ are strictly convex and $X$ has the w-Kadec-Klee property, $X^*$ has the $w^*$-Kadec-Klee property. Let $A_1, A_2: X \rightrightarrows X^*$ be maximal monotone operators of type (D). If ${D}(A_1)= {D}(A_2)=: D$ and $A^{\circ}_1x \in A_2x$ for all  $x \in D,$
		then $A_1 = A_2.$	
	\end{corollary}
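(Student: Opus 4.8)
The plan is to apply the support-function representation of Theorem \ref{T.s4.2} to $A_1$ in order to prove the inclusion $A_2x\subseteq A_1x$ for every $x$, and then to upgrade this inclusion to equality using the maximal monotonicity of $A_2$. Observe first that the standing hypotheses on $X$, $X^*$ and $X^{**}$ are exactly those needed to run Theorem \ref{T.s4.2} for the type (D) operator $A_1$, so for every $x\in D$ and $v\in X\setminus\{0\}$ one has $\sigma_{A_1x}(v)=\liminf_{t\downarrow 0,\,w\to v}\langle A_1^{\circ}(x+tw),w\rangle$.

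First I would fix $x\in D$, $v\in X\setminus\{0\}$ and an arbitrary $x^*\in A_2x$. For $t>0$ and $w\in X$ with $x+tw\in D$, the hypothesis gives $A_1^{\circ}(x+tw)\in A_2(x+tw)$, so the monotonicity of $A_2$ applied to the pairs $(x,x^*)$ and $(x+tw,A_1^{\circ}(x+tw))$ yields $\langle A_1^{\circ}(x+tw)-x^*,\,tw\rangle\ge 0$, that is, $\langle A_1^{\circ}(x+tw),w\rangle\ge\langle x^*,w\rangle$; when $x+tw\notin D$ the left-hand side is $+\infty$ by the convention used in Theorem \ref{T.s4.2}, so the inequality is trivially true. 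Passing to $\liminf$ as $t\downarrow 0$, $w\to v$ and using continuity of $\langle x^*,\cdot\rangle$ on the right, the representation above identifies the left-hand limit with $\sigma_{A_1x}(v)$, whence $\sigma_{A_1x}(v)\ge\langle x^*,v\rangle$. As $x^*\in A_2x$ was arbitrary (and the value $v=0$ is trivial), this gives $\sigma_{A_2x}\le\sigma_{A_1x}$ on all of $X$.

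Next I would convert this inequality between support functions into the set inclusion $A_2x\subseteq A_1x$. The point is that $A_1x$ is convex and weak$^*$ closed: by maximality, $A_1x=\{y^*\in X^*:\langle y^*-z^*,x-z\rangle\ge 0\ \text{for all}\ (z,z^*)\in {Gr}(A_1)\}$, which is an intersection of weak$^*$ closed half-spaces. Hence $A_1x=\{y^*\in X^*:\langle y^*,v\rangle\le\sigma_{A_1x}(v)\ \text{for all}\ v\in X\}$, and every $x^*\in A_2x$ satisfies $\langle x^*,v\rangle\le\sigma_{A_2x}(v)\le\sigma_{A_1x}(v)$ for all $v$, so $x^*\in A_1x$. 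Since both sides are empty off $D$, this proves $A_2x\subseteq A_1x$ for all $x$, i.e. ${Gr}(A_2)\subseteq {Gr}(A_1)$.

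Finally, $A_1$ is monotone and its graph contains that of the maximal monotone operator $A_2$; by maximality of $A_2$ its graph cannot be properly contained in the graph of another monotone operator, so ${Gr}(A_2)={Gr}(A_1)$ and therefore $A_1=A_2$. I expect the only genuinely substantive step to be the first one: recognizing that the minimal-norm hypothesis feeds precisely into the cross-monotonicity inequality $\langle A_1^{\circ}(x+tw),w\rangle\ge\langle x^*,w\rangle$ for $x^*\in A_2x$, which is what allows Theorem \ref{T.s4.2} to bound $\sigma_{A_2x}$ from above by $\sigma_{A_1x}$. The passage from support functions to sets and the closing maximality argument are then routine.
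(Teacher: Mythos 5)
Your proof is correct and follows essentially the same route as the paper's: both apply Theorem \ref{T.s4.2} to $A_1$ and combine it with the cross-monotonicity inequality $\langle A_1^{\circ}(x+tw)-x^*,w\rangle\ge 0$ for $x^*\in A_2x$ to obtain $\sigma_{A_2x}\le\sigma_{A_1x}$, hence $A_2x\subseteq A_1x$. The only (cosmetic) difference is the endgame: the paper notes that the derived inclusion gives $A_2^{\circ}x\in A_1x$ and re-runs the same argument with the roles of $A_1$ and $A_2$ swapped, whereas you conclude directly from ${Gr}(A_2)\subseteq {Gr}(A_1)$ and the maximality of $A_2$; both finishes are routine and valid.
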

	\begin{proof}
		We first show that for $x \in D$ and $v \in X \setminus \{0\}$
		\begin{equation}\label{c.4.2.1}
			\sigma_{A_1x}(v) \ge \sigma_{A_2x}(v),
		\end{equation}
		which would imply $A_2x \subset A_1x$ for $x \in {D}$. This is trivial for $\sigma_{A_1x} = +\infty$. We consider $\sigma_{A_1x}(v) < +\infty$. According to Theorem \ref{T.s4.2}, there exist nets $(t_i)_{i \in I} \subset \mathbb{R}_{+}, (w_i)_{i \in I} \in X$ with $t_i \to 0$ and $w_i \to v$ such that
		\begin{equation}\label{c.4.2.2}
			\sigma_{A_1x}(v) = \underset{i}{\lim}\langle A^{\circ}_1(x+t_iw_i),w_i\rangle.
		\end{equation}
		By assumption, $A^{\circ}_1(x+t_iw_i) \subset A_2(x+t_iw_i)$ for all $i \in I$. The monotonicity of $A_2$ implies that
		$$ \sigma_{A_2x}(w_i) \le \langle A^{\circ}_1(x+t_iw_i), w_i\rangle \ \ \mbox{for all } i \in I.$$
		Combining this with the lower semicontinuity of $\sigma_{A_2x}$ and \eqref{c.4.2.2}, we obtain
		$$ \sigma_{A_2x}(v) \le \underset{i}{\lim}\,\sigma_{A_2x}(w_i) \le \underset{i}{\lim}\langle A^{\circ}_1(x+t_iw_i), w_i\rangle \le \sigma_{A_1x}(v),$$
		which means that \eqref{c.4.2.1} holds. This yields $A^{\circ}_2x \in A_2x \subset A_1x$ for all $x \in D.$ Similarly, we have  $A_1x \subset A_2x$ for every $x \in D$. Therefore $A_1= A_2$, as was to be shown.\end{proof}
	
	\noindent{\bf Acknowledgements:}  A part of this paper was done during the first author's stay at Vietnam Institute for Advanced Study in Mathematics (VIASM). He would like to thank
	VIASM for its wonderful working condition. This work was supported by Vietnam National Program for
	the Development of Mathematics until 2030 (project code: B2022-CTT-07).
	

\begin{thebibliography}{99} 
		
		\bibitem{Ba} V. Barbu, Nonlinear Differential Equations of Monotone Types in Banach Spaces, Monogr. Math., Springer, New York 2010. 
		
		\bibitem{BBWY} H.H. Bauschke,  J.M. Borwein, X. Wang, L. Yao,  Every maximally monotone operator of Fitzpatrick-Phelps type is actually of dense type, Optim. Lett.6 (8) (2012), 1875–1881.
		
		%\bibitem{BiPh} E. Bishop, R.R. Phelps, {The support functionals of a convex set},  Proc. Sympos. Pure Math., Vol. VII, pp. 27–35, Amer. Math. Soc., Providence, RI, 1963.
		
		%\bibitem{Bor}J.M. Borwein, {Maximality of sums of two maximal monotone operators in general Banach space}, Proc. Amer. Math. Soc. 135 (12) (2007), 3917–3924.
		\bibitem{BFG} J.M. Borwein, S. Fitzpatrick, R. Girgensohn, {Subdifferentials whose graphs are not norm {$\times$} weak{$^*$} closed}, Canad. Math. Bull. 46 (4) (2003) 538--545.
		
		\bibitem{BY1} J.M. Borwein, L. Yao, {Structure theory for maximally monotone operators with points of continuity}, J. Optim. Theory Appl. 157 (2013), 1--24. 
		
		%\bibitem{BY2} J.M. Borwein, L. Yao, {Some results on the convexity of the closure of the domain of a maximally monotone operator},  Optim. Lett. 8 (1) (2014) 237--246.
		
		
		
		%\bibitem{B} H. Br\'ezis, Operateurs maximaux monotones et semi-groupes de contractions dans Les espaces de Hilbert. North-Holland, Amsterdam, 1973.
		
		%\bibitem{Br} H. Br\'ezis, Functional Analysis, Sobolev Spaces and Partial Differential Equations, Springer, New York, 2011.
		
		\bibitem{BroRoc}A. Br\o ndsted, R.T. Rockafellar, {On the subdifferentiability of convex functions},  Proc. Amer. Math. Soc. 16 (1965), 605--611.
		
		%\bibitem{Brow} F. Browder, {Nonlinear maximal monotone operators in Banach space},  Math. Ann. 175 (1968), 89--113.
		
		%\bibitem{CJTZ} A. Cabot, A. Jourani, L. Thibault, D. Zagrodny, {The attainment set of the {$\varphi$}-envelope and genericity properties}, Math. Scand. 124 (2) (2019) 203--246.
		
		%\bibitem{CGH}R. Correa, Y. Garc\'{\i}a, A. Hantoute, 	{Integration formulas via the Fenchel subdifferential of nonconvex functions}, Nonlinear Anal. 75 (3) (2012), 1188–1201.  
		
		%\bibitem{DHZ} F. Deutsch, H. Hundal, L. Zikatanov, {Some applications of the Hahn-Banach separation theorem},  Appl. Anal. Optim. 2 (2) (2018) 199--218.
		\bibitem{Di} J. Diestel, Sequences and Series in Banach Spaces, Springer, New York, 1984.
		
		\bibitem{FP} S. Fitzpatrick, R.R. Phelps, 
		Bounded approximants to monotone operators on Banach spaces, Ann. Inst. H. Poincar\'{e} C Anal. Non Lin\'{e}aire, 9~(5)~(1992), 573–595.
		
		\bibitem{GP} L. Gasi\'{n}ski, N.S. Papageorgiou,  Exercises in Analysis. {P}art 1, Problem Books in Mathematics, Springer, Cham, 2014.	
		
		%\bibitem{GP2} L. Gasi\'{n}ski, N.S. Papageorgiou, Exercises in Analysis. {P}art 2: Nonlinear Analysis, Problem Books in Mathematics, Springer, Cham, 2016.
		
		
		\bibitem{Go1} J.P. Gossez, {Op\'{e}rateurs monotones non lin\'{e}aires dans les espaces de 	{B}anach non r\'{e}flexifs}, J. Math. Anal. Appl. 34 (1971), 371--395.
		
		%\bibitem{Go2} J.P. Gossez, {On the range of a coercive maximal monotone operator in a nonreflexive Banach space}, Proc. Amer. Math. Soc. 35 (1972), 88--92.
		
		
		\bibitem{Go}J.P. Gossez, On the extensions to the bidual of a maximal monotone operator,  Proc. Amer. Math. Soc. 62 (1) (1977) 67--71.
		
		\bibitem{HB} A. Hantoute, B.T. Nguyen, {Boundary of maximal monotone operators values},  Appl. Math. Optim. 82 (1) (2020)  225--243.
		
		%\bibitem{Ha} R. Haydon, {Locally uniformly convex norms in Banach spaces and their duals}, J. Funct. Anal. 254 (8) (2008) 2023--2039.
		
		%\bibitem{Ho} R.B. Holmes, Geometric Functional Analysis and its Applications, Graduate Texts in Mathematics, No. 24. Springer-Verlag, New York-Heidelberg, 1975.
		
		
		\bibitem{MaSv1} A.M. Marques, B.F. Svaiter, On Gossez type (D) maximal monotone operator,  J. Convex Anal. 17 (3-4) (2010) 1077--1088.
		
		\bibitem{MaSv} A.M. Marques, B.F. Svaiter, {On the surjectivity properties of perturbations of maximal monotone operators in nonreflexive Banach spaces}, J. Convex Anal. 18 (1) (2011) 209–226.
		
		%\bibitem{Me} R.E. Megginson, An Introduction to Banach Space Theory, Graduate Texts in Mathematics 183, Springer-Verlag, New York, 1998. 
		
		
		
		
		
		
		\bibitem{NK} B.T. Nguyen, P.D. Khanh, {Faces and support functions for the values of maximal monotone operators}, J. Optim. Theory Appl. 186 (3) (2020) 843--863.
		
		%\bibitem{Mar} J.E. Mart\'{\i}nez-Legaz, {Integration of the Fenchel subdifferentials revisited}, Vietnam J. Math. 42 (4) (2014) 533--542.
		
		\bibitem{Phel}R.R. Phelps, {Weak{$^{\ast} $} support points of convex sets in
			{$E^{\ast} $}}, Proc. Amer. Math. Soc. 99 (2) (1987), 319--322.
		
		\bibitem{Phe} R.R. Phelps, Convex Functions, Monotone Operators and Differentiability. Second ed., Lecture Notes in Math., vol. 1364, Springer-Verlag, Berlin, 1993.
		
		%\bibitem{Phelps} R.R. Phelps, Lectures on Maximal Monotone Operators, Extracta Math. 12 (3) (1997) 193--230.
		
		\bibitem{Ra} A. R\u{a}\c{s}canu, {On the maximal monotonicity of subdifferential operators},  Rev. Roumaine Math. Pures Appl. 66 (1) (2021),  237--242.
		
		%\bibitem{Rock.}R.T. Rockafellar, {Characterization of the subdifferentials of convex functions}, Pacific J. Math. 17 (1966), 497--510.
		
		%\bibitem{Rock.1} R.T. Rockafellar, On the maximal monotonicity of subdifferential mappings, Pacific J. Math. 33 (1970), 209--216.
		
		\bibitem{Rock.2} R.T. Rockafellar,  On the maximality of sums of nonlinear monotone operators, Trans. Amer. Math. Soc. 149 (1970), 75–88.
		
		%\bibitem{Ru} W. Rudin, Functional Analysis, Second edition. International Series in Pure and Applied Mathematics. McGraw-Hill, Inc., New York, 1991.
		
		\bibitem{Si1} S. Simons, The range of a monotone operator, J. Math. Anal. Appl., 199 (1) (1996) 176--201.
		
		\bibitem{Simons} S. Simons, From Hahn-Banach to Monotonicity, Second edition. Lecture Notes in Mathematics, 1693. Springer, New York, 2008. 
		
		
		
		%\bibitem{Ver} A. Verona,  {Regular maximal monotone operators and the sum theorem}, J. Convex Anal. 7 (1) (2000),  115--128.
		
		%\bibitem{CC} L. Yuan-Chuan,  Y. Cheh-Chih, {Some characterizations of convex functions}, Comput. Math. Appl. 59 (1) (2010) 327--337
		
		\bibitem{Za} C. Z\u{a}linescu, Convex Analysis in General Vector Spaces, World Scientific Publishing Co., Inc., River Edge, NJ, 2002.
		
		%\bibitem{Borwein82} Borwein, J.M.: \textit{A note on $\varepsilon$-subgradients and maximal monotonicity}. Pac. J. Math. \textbf{103}, 307--314 (1982)
	\end{thebibliography}
\end{document}